\theoremstyle{theorem}
\newtheorem{thm}{Theorem}[section]
\newtheorem{lem}[thm]{Lemma}
\newtheorem{prop}[thm]{Proposition}
\newtheorem{cor}[thm]{Corollary}
\newtheorem{question}[thm]{Question}
\theoremstyle{definition}
\newtheorem{nota}[thm]{Notation}
\newtheorem{conv}[thm]{Convention}
\theoremstyle{remark}
\newtheorem{Rk}[thm]{Remark}
\newcommand{\Isog}{\mathsf{I}\mathsf{s}\mathsf{o}\mathsf{g}}
\newcommand{\CM}{\mathsf{C}\mathsf{M}}
\newcommand{\fCM}{\mathfrak{C}\mathfrak{M}}
\newcommand{\fCMI}{\mathfrak{C}\mathfrak{M}\mathfrak{I}}
\newcommand{\CC}{\mathbb{C}}
\begin{document}

\title[Tame structures in $\CC(t)$]{Decidability of some 
complicated structures definable in $\mathbb{C}(t)$}
\author{Thomas Scanlon}
\email{scanlon@math.berkeley.edu}
\address{Department of Mathematics \\ University of California, Berkeley \\ Evans Hall \\ Berkeley, CA 94720-3840 \\ USA}
\dedicatory{In memory of Thanases Pheidas, 1958 -- 2023}
\begin{abstract}
Several properly countable unions of algebraic sets in $\CC^n$ are 
definable in $\CC(t)$ including the set $\CM$ of $j$-invariants of 
complex elliptic curves with complex multiplication.  It has been suggested
that one could prove the undecidability of $\operatorname{Th}(\CC(t))$ by
showing that the theory of the structure $\fCM := (\CC,+,\cdot,0,1,\CM)$
of the field of complex numbers considered with a
unary predicate picking out $\CM$  is undecidable.  We show using 
an effective version of the Andr\'{e}-Oort conjecture that to the 
contrary $\operatorname{Th}(\fCM)$ is stable and decidable. 
We discuss some related structures on the complex numbers 
definable in $\CC(t)$ and how their theories may be connected
to the Zilber-Pink conjectures.
\end{abstract}

\maketitle

It is a long standing open problem~\cite{Rob-63} whether the first-order theory of the 
field $\mathbb{C}(t)$ of rational functions in the 
single variable $t$ with coefficients from the 
field $\mathbb{C}$ of complex numbers is decidable.
In various public lectures, Pheidas  described 
a strategy, which he attributed to an anonymous
reviewer of a grant proposal~\footnote{After I talked about this topic some years ago, Clifton 
Ealy wrote to me saying that he had asked 
Lou van den Dries whether he may have been the 
source of this observation.  Van den Dries' answer 
on June 30${}^\text{th}$, 2008, was that he recalled
having learned about the interpretation of the 
endomorphism rings (as groups) of elliptic curves 
in $\CC(t)$ from Pheidas and then noticed the 
consequence that this would give the 
definability of $\CM$.  This is not exactly a
confirmation that he was the source of the 
observation, but it is also not exactly a 
denial.}, 
for showing that this theory is undecidable based 
on interpreting a complicated structure on the 
complex numbers.

Using the fact that there are no nonconstant rational
maps from the projective line to curves of positive
genus, it is easy to see that $\mathbb{C}$ is 
definable in $\mathbb{C}(t)$.  For example, we have
\[ \mathbb{C} = \{ a \in \mathbb{C}(t) : (\exists y) 
a^3 + y^3 = 1 \} \text{ .} \]

The harder observation is that the set $\CM$ of 
$j$-invariants of elliptic curves with complex 
multiplication is also definable in 
$\mathbb{C}(t)$.  The set $\CM$ is a countably infinite
set of algebraic integers and is known to have rich 
arithmetic.  As such, one might expect that 
the structure $\fCM := (\CC,+,\cdot,0,1,\CM)$
of the field of complex numbers considered with a
unary predicate picking out $\CM$ would have a 
complicated and, in particular, an undecidable theory.
Since $\fCM$ is interpretable in $\CC(t)$, if this 
expectation were verified, then one would have 
succeeded in showing that the theory of $\CC(t)$ is 
undecidable.

In this note, we observe that it follows from a 
theorem of Binyamini~\cite{Bin-effective-AO} on an
effective version of the Andr\'{e}-Oort conjecture 
for products of modular curves that the theory of 
$\fCM$ is stable and decidable.  

The considerations which give the definability of $\CM$ 
also yield the definability of several other sets which are 
naturally properly countable unions of algebraic sets.  For
example, the set \[ \Isog := \{ ( j(E), j(E/\Gamma) ) \in \CC \times \CC :
E \text{ an elliptic curve, } \Gamma < E \text{ a finite subgroup  }\} \]
is definable in $\CC(t)$ where $j(E)$ denotes the $j$-invariant of the elliptic curve 
$E$.  I would \emph{guess}
that the Zilber-Pink conjecture would be relevant in the 
analysis of such structures as \(\fCMI := (\CC,+,\cdot,0,1,\CM,\Isog)\). 
However, I have been unsuccessful both in showing  that stability of $\fCMI$ follows 
from the Zilber-Pink conjecture and that decidability of the theory of $\fCMI$ follows
from an effective form of the Zilber-Pink conjecture.   

This paper is organized as follows.  
In Section~\ref{sec:background-ec} we recall some basic 
results and constructions involving elliptic curves. 
In Section~\ref{sec:interpreting}
we explain in detail how to define $\CM$, $\Isog$, and related sets in $\CC(t)$.  Section~\ref{sec:stability}
contains the proof of the stability and decidability of $\operatorname{Th}(\fCM)$.  Section~\ref{sec:questions}
contains some questions and observations around the connections between the Zilber-Pink conjecture and the 
complexity of the logical theory of the structures on $\CC$ interpreted in $\CC(t)$.

As noted above, I learned about the strategy for proving the undecidability of $\operatorname{Th}(\CC(t))$ from 
Thanases Pheidas. He and I talked about the relevance of the Andr\'{e}-Oort conjecture to this problem on several 
occasions, initially at the 2007 ICMS workshop on Number Theory and Computability and
most recently at the Summer 2022 MSRI program on Decidability, Definability and Computability in Number Theory. I had been 
hoping to resolve the kinds of problems described in Section~\ref{sec:questions} before releasing this 
paper, but it is clear that I have waited already too long. I had wanted to show the final version to Thanases, but
it is too late for that.  Instead, this paper is dedicated to his memory.

I have presented these ideas in various lectures, including at the 2023 Arizona Winter School.  
Further exposition of the number theoretic background is provided in my notes~\cite{Sc-AWS23} for
those lectures.

Part of this research was performed while the author was visiting the Mathematical Sciences Research Institute (MSRI) which is supported by the National Science Foundation (Grant No. DMS-1928930).  The work has also been supported by NSF grants 
DMS-1800492 and DMS-22010405.

\section{Some background on elliptic curves}
\label{sec:background-ec}

An elliptic curve \(E\) over a field \(K\) is 
an irreducible, projective, one-dimensional, 
(necessarily commutative) algebraic 
group over \(K\).  
For more details on elliptic curves, 
consult~\cite{Husemöller} or~\cite{Silv-AEC}.  For higher dimensional
abelian varieties, see~\cite{milneAV}. 

\begin{nota}
\label{nota:elliptic-curves-Weierstrass}
Fix a field \(K\).
We shall take our elliptic curves in 
Weierstrass form.  
For \(g_2 \in K\) and 
\(g_3 \in K\) with 
\(\Delta(g_2,g_3) := g_2^3 - 27 g_3^2 \neq 0 \)
we write \(E = E_{g_2,g_3}\) for the elliptic curve whose affine equation is 
\(y^2 = 4x^3 - g_2 x - g_3 \) and 
one point at infinity.  As explained in our
references, there is an algebraic group structure on 
\(E\) in which the point at infinity is the identity and 
addition is given by the secant-tangent method.
\end{nota}

In practice, we will only work with constant \(g_2\) and \(g_3\),
that is, we take \(K = \CC\).

\begin{nota}
\label{nota:j-alg}
 The algebraic \(j\)-invariant
of \(E\) is \[j^\text{alg}(g_2,g_3) := \frac{12^3 g_2^3}{\Delta} \] and it is known that 
\(E_{g_2,g_3} \cong E_{g_2',g_3'}\) as algebraic groups
if and only if 
\(j^\text{alg}(g_2,g_3) = j^\text{alg}(g_2',g_3')\).

Using this invariance under isomorphism, we write \(j(E)\) 
for \(j^\text{alg}(g_2,g_3)\) if 
\(E\) is an elliptic curve which is isomorphic to the elliptic curve 
\(E_{g_2,g_3}\).
\end{nota}

We may understand elliptic curves over \(\CC\)
via complex analysis. See~\cite[Chapter VI]{Silv-AEC} and~\cite[Chapter 11]{Husemöller} for some of this 
theory. 

\begin{nota}
\label{nota:fractional-linear}
Let \(\operatorname{GL}_2(\mathbb{R})^+\) denote the 
group of \(2 \times 2\) matrices with real entries and 
positive determinant.  There is an action of \(\operatorname{GL}_2(\mathbb{R})^+\) on 
\(\mathfrak{h}\) via the rule 
\[\left( \begin{matrix} a & b \\ c & d \end{matrix} \right) \cdot \tau := \frac{a \tau + b}{c \tau + d} \text{ .}\]
\end{nota}

If \(\tau \in \mathfrak{h} = \{ z \in \CC : \operatorname{Im}(z) > 0 \}\), then the complex Lie group 
\(T_\tau := \CC / (\mathbb{Z} + \mathbb{Z} \tau)\) 
is isomorphic, as a complex Lie group, to 
(the analytification
of) an elliptic curve.  This identification is achieved
through the use of Eisenstein series and the Weierstrass
\(\wp\)-function.   The exact formulae are not really 
necessary for our discussion, but for the sake of culture we 
recall them.

\begin{nota}
\label{nota:wp}
The Weierstrass \(\wp\)-function is a meromorphic map
\(\wp:\CC \times \mathfrak{h} \to \CC\) defined by 
\(\wp(z,\tau) := \frac{1}{z^2} + \sum_{(n,m) \in 
\mathbb{Z}^2 \smallsetminus \{(0,0)\}} \frac{1}{(z - n - m\tau)^2} - \frac{1}{(n+m\tau)^2}\)
We write \(\wp'(z,\tau)\) for the derivative of \(\wp\) 
with respect to \(z\).  
\end{nota}

\begin{nota}
\label{nota:Eisenstein}
For \(k \in \mathbb{Z}_+\) the Eisenstein function 
\(E_{2k}:\mathfrak{h} \to \CC\) is defined by 
\[E_{2k}(\tau) := \sum_{(n,m) \in 
\mathbb{Z}^2 \smallsetminus \{(0,0)\}} \frac{1}{(n+m \tau)^{2k}}\]

We set \(g_2(\tau) := 60 G_4(\tau)\), 
\(g_3(\tau) := 140 G_6(\tau)\), and define Klein's 
\(j\)-function by 
\(j(\tau) := j^\text{alg}(g_2(\tau),g_3(\tau))\).  
\end{nota}

With Notation~\ref{nota:wp} and~\ref{nota:Eisenstein}
in place, we may recall some key facts about these functions.

The complex torus \(T_\tau\) is isomorphic to (the 
complexification of) the elliptic curve 
\(E_{g_2(\tau),g_3(\tau)}\) via the map 
\(z \mapsto (\wp(z,\tau),\wp'(z,\tau))\).  

A map of complex Lie groups between 
two one-dimensional complex tori 
\(\psi:T_\tau \to T_{\tau'}\) is given by 
multiplication by some complex number \(\lambda\) for 
which \( \lambda (\mathbb{Z} + \mathbb{Z} \tau) \leq 
(\mathbb{Z} + \mathbb{Z} \tau') \).  We say that the 
complex torus \(T_\tau\) has complex multiplication 
if and only if there is some 
\(\lambda \in \mathbb{C} \smallsetminus \mathbb{Z}\) 
for which \(\lambda (\mathbb{Z} + \mathbb{Z} \tau) 
\subseteq (\mathbb{Z} + \mathbb{Z} \tau)\).  It is 
an easy computation to check that \(T_\tau\) has 
complex multiplication if and only if 
\([\mathbb{Q}(\tau):\mathbb{Q}] = 2\), that is, 
\(\tau\) is a quadratic, imaginary number.  We say 
that an elliptic curve \(E\) over \(\CC\) has 
complex multiplication if it (or, really, its analytification)
is a complex torus with complex multiplication.  
Alternatively, we may detect that the elliptic curve
has complex multiplication through its endomorphism 
ring \(\operatorname{End}(E)\), the set of maps of 
algebraic groups \(\psi:E \to E\) with addition defined 
by \((\psi + \phi)(x) = (\psi(x) +_E \psi(y))\) where
the second addition is in the sense of the algebraic 
group structure on \(E\) and multiplication defined by 
composition of functions.   For an elliptic 
curve \(E\) over the complex numbers, either 
\(\operatorname{End}(E) \cong \mathbb{Z}\) or 
\(\operatorname{End}(E)\) is an order in a quadratic 
imaginary field, and in particular has rank two as  
an abelian group.  The elliptic curve \(E\) has 
complex multiplication if and only if we are in this
second case.

It is known that \(j:\mathfrak{h} \to \CC\) is surjective. 
Moreover, it follows from the properties of the 
algebraic \(j\)-function that \(j(\tau) = j(\tau')\) if
and only if \(T_\tau \cong T_{\tau'}\) if and only 
if \(E_{g_2(\tau),g_3(\tau)} \cong E_{g_2(\tau'),g_3(\tau')}\)
where the first isomorphism is taken in the category of 
complex Lie groups and the second is in the category of 
algebraic groups. 

This equivalence may be described via the action of 
\(\operatorname{GL}_2(\mathbb{R})^+\) on \(\mathfrak{h}\)
as \(j(\tau) = j(\tau')\) if and only if there is some 
\(\gamma \in \operatorname{SL}_2(\mathbb{Z})\) with
\(\tau' = \gamma \cdot \tau\).  

Taking 
\(\gamma \in \operatorname{GL}_2(\mathbb{Q})^+ := 
\operatorname{GL}_2(\mathbb{Q}) \cap \operatorname{GL}_2(\mathbb{R})^+\) instead, 
we find that the complex tori 
\(T_\tau\) and \(T_{\gamma \cdot \tau}\) are 
all \emph{isogenous}, even by cyclic isogenies.
That is, there is a map of complex tori
\(T_\tau \to T_{\gamma \cdot \tau}\) whose kernel
is a finite cyclic group.   This relation is 
expressed by the modular polynomials.

\begin{nota}
\label{nota:modular-poly}
For each positive integer 
\(\ell \in \mathbb{Z}_+\), we let 
\(\Phi_\ell(x,y) \in \mathbb{Z}[x,y]\) be the 
\(\ell^\text{th}\) modular polynomial~\cite[page 335]{Knapp}.  
This polynomial \(\Phi_\ell\) is 
characterized by the properties that it is 
monic in \(x\) and for any pair of 
complex numbers \((j_1,j_2)\) we have 
\(\Phi_\ell(j_1,j_2) = 0\) if and only if there
are elliptic curves \(E_1\) and \(E_2\)
with \(j(E_1) = j_1\), \(j(E_2) = j_2\), and
a map of algebraic groups 
\(\psi:E_1 \to E_2\) with kernel isomorphic 
to \(\mathbb{Z}/\ell\mathbb{Z}\).  
It is a general fact that if two 
elliptic curves \(E_1\) and \(E_2\) are
isogenous, meaning that there is a map of 
algebraic groups \(\psi:E_1 \to E_2\) with 
finite kernel, then \(E_1\) has complex
multiplication if and only if \(E_2\) 
does~\cite[Exercise VI.6.9]{Silv-AEC}.

For \(\gamma \in \operatorname{GL}_2(\mathbb{Q})^+\)
there is a number \(\ell(\gamma) \in \mathbb{Z}_+\)
so that \(\Phi_{\ell(\gamma)}(j(\tau),j(\gamma \cdot \tau)) \equiv 0\).  That is, the elliptic curves 
\(E_{g_2(\tau),g_3(\tau)}\) and \(E_{g_2(\gamma \cdot \tau),g_3(\gamma \cdot \tau)}\) are all isogeneous by 
maps \(\psi:E_{g_2(\tau),g_3(\tau)} \to E_{g_2(\gamma \cdot \tau),g_3(\gamma \cdot \tau)}\) having kernel 
cyclic of order \(\ell(\gamma)\).   Moreover, for 
each \(\ell\) there are 
\(\gamma \in \operatorname{GL}_2(\mathbb{Q})^+\) with 
\(\ell(\gamma) = \ell\).

\end{nota}

\section{Some structures interpreted in $\CC(t)$}
\label{sec:interpreting}

In this section we explain in detail how 
$\CM$, $\Isog$, and related structures are 
defined in $\CC(t)$.  As noted in the 
introduction, these constructions are not due to me, but 
as they are not documented in the literature, they are 
included here.

We start by recording the observation (which
already appears in~\cite{Rob-63}) that the 
field of constants is definable in 
\(\CC(t)\).

\begin{lem}
\label{lem:definable-constants}
The field of complex numbers \(\CC\) is a 
definable subset of \(\CC(t)\).
\end{lem}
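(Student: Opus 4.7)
The plan is to verify the formula already displayed in the introduction, namely that
\[
\CC = \{a \in \CC(t) : (\exists y)\, a^3 + y^3 = 1\}\text{.}
\]
The inclusion from left to right is immediate: given $a \in \CC$, algebraic closedness of $\CC$ supplies a cube root $y \in \CC$ of $1 - a^3$.

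For the nontrivial inclusion, suppose $a \in \CC(t)$ and $y \in \CC(t)$ satisfy $a^3 + y^3 = 1$. I would interpret the pair $(a,y)$ as a morphism $\mathbb{P}^1_{\CC} \to C$, where $C$ is the smooth projective completion of the affine curve $X^3 + Y^3 = 1$ over $\CC$. The next step is the genus computation: the projective Fermat cubic is a smooth plane cubic, and smooth plane cubics have genus $1$ by the degree-genus formula $g = \binom{d-1}{2}$ with $d = 3$. (Alternatively, one checks directly that $C$ is an elliptic curve.) Since $\mathbb{P}^1$ has genus $0$, a standard consequence of Riemann--Hurwitz (or of the fact that any map between smooth projective curves that is not constant cannot decrease the genus) is that every morphism $\mathbb{P}^1_{\CC} \to C$ is constant. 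Hence $(a,y)$ takes a single value in $C(\CC)$, and in particular $a \in \CC$.

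Each ingredient is classical and needs no serious work; the only small subtlety is making sure that an element of $\CC(t)$ really gives rise to a morphism from all of $\mathbb{P}^1$ (rather than from an open subset), which is automatic because any rational map from a smooth curve to a projective variety extends to a morphism on the whole source. There is no real obstacle: the lemma serves mainly to fix the language for the rest of the paper, where richer subsets such as $\CM$ and $\Isog$ will be defined by similar, but substantially more elaborate, applications of the principle that nonconstant rational maps from $\mathbb{P}^1$ cannot land in curves or varieties of higher genus or more general hyperbolic type.
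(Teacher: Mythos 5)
Your proof is correct and follows the same route as the paper: both verify the formula $(\exists y)\, x^3+y^3=1$ (the paper states the argument for a general $f(x,y)$ cutting out a smooth curve of genus $\geq 1$ with $f(a,y)$ nonconstant for each $a\in\CC$, then specializes to the Fermat cubic), using algebraic closedness for one inclusion and the nonexistence of nonconstant maps from $\mathbb{P}^1$ to a positive-genus curve for the other. No gaps.
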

\begin{proof}
See~\cite[Section IV.2]{Hartshorne-AG} 
for the necessary results on the genus of 
algebraic curves.

It suffices to consider any two variable polynomial 
\(f(x,y) \in \mathbb{Z}[x,y]\) for which the 
equation \(f(x,y) = 0\) defines the affine part of a 
smooth curve \(X\) of genus at least one and for which 
the one variable polynomial \(f(a,y)\) is nonconstant 
for every choice of \(a \in \mathbb{C}\).  Then the 
formula \(\phi(x) := (\exists y) f(x,y) = 0 \) defines
\(\mathbb{C}\).  Indeed, if \(a \in \mathbb{C}\),
then because \(f(a,y) \in \CC[y]\) is a nonconstant
polynomial, it has a root.  On the other hand, if 
\(a \in \CC(t)\) and \(b \in \CC(t)\) witnesses 
that \(\phi(a)\) holds, then \((a,b)\) defines 
a rational map \(\mathbb{P}^1_\CC \dashrightarrow X\). 
As the genus of \(X\) is at least one, this map is 
necessarily constant, which means in particular
that \(a\) is constant.

For a concrete choice of \(f\), we may take 
\(f(x,y) = x^3 + y^3 - 1\).
\end{proof}

There is a standard construction to interpret a 
finite algebraic extension \(L\) of the field 
\(K\) in \(K\).  Indeed, for any fixed positive 
integer \(d\) the class of all extensions of 
\(K\) of degree \(d\) is uniformly interpretable.
Let us formalize this fact with the following 
proposition.

\begin{prop}
    \label{prop:interpret-finite-extension}
    For each positive integer \(d\) the class of field extensions of degree \(d\) is uniformly interpretable relative to the theory of fields.  That is, there is a formula \(\theta(\mu)\) in the \(d^2\) variables \(\mu\), a formula \(\xi\) in \(d\) variables, and definable functions \(\otimes_\mu\) and \(\oplus_\mu\) taking \(2d\) arguments returning a \(d\) tuple, depending on the parameters \(\mu\) so that for any field \(K\) and any tuple \(\mu \in K^{d^3}\) with \(K \models \theta(\mu) \) we have that \( L_\mu := (K^d,\oplus_\mu,\otimes_\mu)\) is a field and \(\xi(K) \subseteq L_\mu\) picks out a subfield isomorphic to \(K\) for which \([L_\mu:K]=d\). Moreover, for every every extension \(L\) of \(K\) of degree \(d\) there is some choice of parameters \(\mu\) for which \(L_\mu \cong L\) over the identification of \(K\) with \(\xi(K)\) .     
\end{prop}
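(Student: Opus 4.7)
The plan is to realize each degree $d$ extension of $K$ as the vector space $K^d$ equipped with a $K$-bilinear multiplication given by a table of structure constants. If $L/K$ has degree $d$ and $e_1 = 1, e_2, \ldots, e_d$ is a $K$-basis, then the products $e_i e_j = \sum_k c_{ij}^k e_k$ determine constants $c_{ij}^k \in K$, and conversely any such family defines a $K$-bilinear operation on $K^d$. I would take $\mu := (c_{ij}^k)_{1 \le i,j,k \le d}$ as the parameters, declare $\oplus_\mu$ to be componentwise addition, and set
\[
\otimes_\mu(\vec x, \vec y)_k \; := \; \sum_{i,j=1}^d c_{ij}^k \, x_i y_j,
\]
which is a quantifier-free definable function of $\vec x, \vec y$.

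The formula $\theta(\mu)$ would then be the conjunction of the polynomial identities in the $c_{ij}^k$ expressing commutativity $c_{ij}^k = c_{ji}^k$, associativity $\sum_\ell c_{ij}^\ell c_{\ell m}^k = \sum_\ell c_{jm}^\ell c_{i\ell}^k$ for all $i,j,k,m$, and the assertion that $(1,0,\ldots,0)$ is a two-sided identity ($c_{1j}^k = \delta_{jk}$), together with the single $\forall \exists$ sentence stating that every nonzero $\vec x \in K^d$ has a $\otimes_\mu$-inverse. Distributivity of $\otimes_\mu$ over $\oplus_\mu$ is automatic from the bilinear formula defining $\otimes_\mu$. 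The subfield formula is simply $\xi(\vec x) := (x_2 = 0 \wedge \cdots \wedge x_d = 0)$; the unit axiom ensures $\xi(K)$ is a subfield of $L_\mu$ naturally isomorphic to $K$, and the underlying set being $K^d$ forces $[L_\mu : K] = d$ whenever $\theta(\mu)$ holds.

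For the ``moreover'' clause, starting from an arbitrary degree $d$ extension $L/K$ I would choose any $K$-basis of $L$ beginning with $1$, let $(c_{ij}^k)$ be the resulting structure constants, and check that the identification $\sum_i x_i e_i \leftrightarrow (x_1,\ldots,x_d)$ is by construction an isomorphism of fields extending the identity on $K$, witnessing $L_\mu \cong L$ over $\xi(K) \cong K$.

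There is no real obstacle here: each of commutativity, associativity, the unit law, and invertibility of nonzero elements of a finite-dimensional commutative $K$-algebra is a standard first-order condition on the structure constants, so the entire argument reduces to the bookkeeping of assembling these axioms into $\theta(\mu)$. (The count $d^3$ of structure constants is the one that naturally arises; one could alternatively parametrize $L$ as $K[A]$ for a single $K$-linear endomorphism $A$ of $K^d$ with irreducible minimal polynomial of degree $d$ if one wishes to reduce the number of parameters.)
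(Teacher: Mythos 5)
Your proposal is correct and follows essentially the same route as the paper: structure constants $\mu_{i,j,k}$ as the interpreting parameters, componentwise addition, the bilinear formula for $\otimes_\mu$, the formula $\xi$ cutting out the first coordinate axis, and $\theta(\mu)$ obtained by relativizing the field axioms. (You rightly use $d^3$ parameters; the ``$d^2$'' in the statement is a slip in the paper, as the later ``$\mu \in K^{d^3}$'' confirms.)
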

\begin{proof}
While this construction is standard, allow us to recall some of the details.  For an extension \(L\) of the field \(K\) of degree \(d\), we may fix some \(K\)-basis \(\{e_1, \ldots, e_d \}\) of \(L\).   Making a linear change of variables if need be, we may assume that \(e_1 = 1\).  
    
Let \(\mu\) be the \(d \times d \times d\) array of coefficients for which \[e_i e_j = \sum_{k=1}^d \mu_{i,j,k} e_k\] Note that the coefficients \(\mu_{i,j,k}\) are uniquely determined as \(\{e_1, \ldots, e_d \}\) is a \(K\)-basis.  Since \(e_i e_j = e_j e_i\), we could get away with recording these valued only for \(i \leq j\), but there is no harm in writing too much. 

Let \(\xi(x_1, \ldots,x_d)\) be the formula \(\bigwedge_{i=2}^d x_i = 0 \).  Define {\small  \[(x_1,\ldots,x_d) \oplus_\mu (y_1,\ldots,y_d) := (x_1 + y_1, 
\ldots, x_k+y_k, \ldots, x_d + y_d)\]} and {\small
\[(x_1, \ldots, x_d) \otimes_\mu (y_1, \ldots, y_d) := (\sum_{1 \leq i, j \leq d} x_i y_j \mu_{i,j,1}, \ldots, \sum_{1 \leq i, j \leq d} x_i y_j \mu_{i,j,k}, \ldots, \sum_{1 \leq i, j \leq d} x_i y_j \mu_{i,j,d}) \]}

Then, \(L \cong L_\mu = (K^d, \oplus_\mu, \otimes_\mu)\) and \(\xi(K)\) identifies with the image of \(K\) in \(L\) under the usual embedding of \(K\)-algebras.  

In general, the formula \(\theta(\mu)\) asserts that \(L_\mu\) is a field with multiplicative identity \((1,0,\ldots,0)\) by relativizing the finitely many field axioms to \(K\). 
\end{proof}

Let us use the construction of 
Proposition~\ref{prop:interpret-finite-extension}  to encode the
function fields of the elliptic curves 
\(E_{g_2,g_3}\) as \(g_2\) and \(g_3\) range through 
\(\CC\) with \(\Delta \neq 0 \).

\begin{nota}
\label{notation:ff-elliptic}
For \(g_2 \in \mathbb{C}\) and \(g_2 \in \mathbb{C}\) 
with \(\Delta(g_2,g_3) \neq 0\), let 
\(\mu\) be the array describing multiplication on the 
field \(L_\mu := L^{g_2,g_3} := \CC(t)[y]/(y^2 - 4t^3 + g_2 t + g_3)\) 
relative to the basis \((e_1, e_2) = (1,y)\).  That is, 
\(\mu_{1,1,1} = 1\), \(\mu_{1,1,1} = 0 \), \(\mu_{1,2,1} = 0\), 
\(\mu_{1,2,2} = 1\), \(\mu_{2,1,1} = 0\), \(\mu_{2,1,2} = 1\), 
\(\mu_{2,2,1} = 4t^3 - g_2 t - g_3\), and \(\mu_{2,2,2} = 0\).
\end{nota}

\begin{prop}
\label{prop:rank-maps-elliptic}
For \(r = 0\), \(1\), and \(2\) there is formula \(\rho_r(g_2,g_3,g_2',g_3')\) of
the four variables \(g_2\), \(g_3\), \(g_2'\), and \(g_3'\) 
so that 
\(\mathbb{C}(t) \models \rho_r(g_2,g_3,g_2',g_3')\) if and only if 
\begin{itemize}
    \item \(g_2\), \(g_3\), \(g_2'\), and \(g_3'\) all 
    belong to \(\CC\), 
    \item \(\Delta(g_2,g_3) \neq 0\) and \(\Delta(g_2',g_3') \neq 0\), and 
    \item \( \operatorname{rk} \operatorname{Hom}(E_{g_2,g_3},E_{g_2',g_3'}) = r \)
\end{itemize}
\end{prop}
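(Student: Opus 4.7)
My plan is to combine the interpretation of the function field \(L^{g_2,g_3}\) from Notation~\ref{notation:ff-elliptic} and Proposition~\ref{prop:interpret-finite-extension} with the Albanese-style identification \(\operatorname{Hom}(E_{g_2,g_3},E_{g_2',g_3'}) \cong E_{g_2',g_3'}(L^{g_2,g_3})/E_{g_2',g_3'}(\CC)\), and then to detect the rank by counting cosets modulo \(2\). The conditions \(g_2,g_3,g_2',g_3' \in \CC\) and \(\Delta(g_2,g_3), \Delta(g_2',g_3') \neq 0\) are first-order in \(\CC(t)\) via Lemma~\ref{lem:definable-constants}; given these, Proposition~\ref{prop:interpret-finite-extension} interprets \(L^{g_2,g_3}\) inside \(\CC(t)\), and since \(\CC\) is definable as a subfield of \(L^{g_2,g_3}\), the subgroup \(E_{g_2',g_3'}(\CC) \subseteq E_{g_2',g_3'}(L^{g_2,g_3})\) is definable.

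The identification of \(\operatorname{Hom}\) is standard: a morphism \(\psi: E_{g_2,g_3} \to E_{g_2',g_3'}\) corresponds to its generic fiber, an \(L^{g_2,g_3}\)-point of \(E_{g_2',g_3'}\), and decomposes uniquely as \(\psi = \psi_0 + \psi(O)\) with \(\psi_0\) a group homomorphism and \(\psi(O) \in E_{g_2',g_3'}(\CC)\). This yields the direct-sum decomposition
\[
E_{g_2',g_3'}(L^{g_2,g_3}) \cong \operatorname{Hom}(E_{g_2,g_3},E_{g_2',g_3'}) \oplus E_{g_2',g_3'}(\CC).
\]
Write \(G\) for the quotient; by Lang--N\'eron and the structure theory of \(\operatorname{Hom}\) between elliptic curves, \(G\) is torsion-free, finitely generated, of rank \(r \in \{0,1,2\}\).

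The step I expect to be the main obstacle is detecting the rank first-order without quantifying over \(\mathbb{Z}\): the naive reading of ``rank \(\geq 2\)'' as ``there exist two \(\mathbb{Z}\)-linearly independent morphisms'' quantifies over integer coefficients, which is not available in the theory of \(\CC(t)\). I sidestep this using that for a torsion-free finitely generated abelian group \(G\) of rank \(r\) one has \(|G/2G| = 2^r\), so \(r\) is detected by \(|G/2G| \in \{1,2,4\}\). The relation ``\(\psi \equiv \psi' \pmod{2G}\)'' pulls back to
\[
\exists \phi \in E_{g_2',g_3'}(L^{g_2,g_3}), \; \exists c \in E_{g_2',g_3'}(\CC) \; : \; \psi - \psi' = [2]\phi + c,
\]
which is first-order because the doubling map \([2]\) and the group law on a Weierstrass elliptic curve are given by explicit rational functions of the coordinates.

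Assembling, \(\rho_2\) asserts the existence of \(\psi_1,\psi_2,\psi_3,\psi_4 \in E_{g_2',g_3'}(L^{g_2,g_3})\) that are pairwise inequivalent modulo \(2 E_{g_2',g_3'}(L^{g_2,g_3}) + E_{g_2',g_3'}(\CC)\); \(\rho_0\) asserts \(E_{g_2',g_3'}(L^{g_2,g_3}) = E_{g_2',g_3'}(\CC)\); and \(\rho_1 := \neg\rho_0 \wedge \neg\rho_2\). Each is then conjoined with the first-order conditions that \(g_2,g_3,g_2',g_3' \in \CC\) and that the two discriminants are nonzero.
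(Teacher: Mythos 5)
Your proposal is correct and follows essentially the same route as the paper: interpret \(L^{g_2,g_3}\) via Proposition~\ref{prop:interpret-finite-extension}, identify \(\operatorname{Hom}(E_{g_2,g_3},E_{g_2',g_3'})\) with \(E_{g_2',g_3'}(L^{g_2,g_3})/E_{g_2',g_3'}(\CC)\), and read off the rank \(r\) from the size of the quotient modulo doubling (the paper counts cosets of \(2E_{g_2',g_3'}(L^{g_2,g_3})\) directly, using divisibility of \(E_{g_2',g_3'}(\CC)\), while you quotient by \(2E_{g_2',g_3'}(L^{g_2,g_3})+E_{g_2',g_3'}(\CC)\) — an immaterial difference). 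The only cosmetic divergence is that the paper writes \(\rho_1\) explicitly rather than as \(\neg\rho_0\wedge\neg\rho_2\), which is equivalent here since the rank always lies in \(\{0,1,2\}\).
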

\begin{proof}
By Lemma~\ref{lem:definable-constants} the first two conditions may be 
expressed with a formula.  Using the fact that every rational map from 
one elliptic curve to another takes the form of a map of algebraic groups followed by a translation, we see that the interpretable group 
\(E(L^{g_2,g_3}) = E_{g_2',g_3'}(\CC(E_{g_2,g_3}))\) fits into the following exact 
sequence:
\[ \begin{tikzcd}  0 \ar[r] & E_{g_2',g_3'}(\CC) \ar[r] & E_{g_2',g_3'}(L^{g_2,g_3}) \ar[r] & \operatorname{Hom}(E_{g_2,g_3},E_{g_2',g_3'}) \ar[r] & 0 \end{tikzcd} \]

That is, as a group, we may identify \(\operatorname{Hom}(E_{g_2,g_3},E_{g_2',g_3'})\) 
with the interpretable group \(E_{g_2',g_3'}(L^{g_2,g_3})/E_{g_2',g_3'}(\mathbb{C})\).  
On general grounds (see~\cite[Theorem VI.6.1]{Silv-AEC}), 
this group is isomorphic to one of 
\begin{itemize} 
\item \(0\), if \(E_{g_2,g_3}\) and \(E_{g_2',g_3'}\) are not isogenous,
\item \(\mathbb{Z}\), if \(E_{g_2,g_3}\) and \(E_{g_2',g_3'}\) are isogenous and 
\(E_{g_2',g_3'}\) does not have complex multiplication, or  
\item  \(\mathbb{Z}^2\), if \(E_{g_2,g_3}\) and \(E_{g_2',g_3'}\) are isogenous and 
\(E_{g_2',g_3'}\) has complex multiplication.
\end{itemize}
We may recognized the
first case that \(\operatorname{Hom}(E_{g_2,g_3},E_{g_2',g_3'}) = 0\), 
with the formula expressing that \(E_{g_2',g_3'}(L^{g_2,g_3}) = E_{g_2',g_3'}(\CC)\), 
giving the formula \(\rho_0(g_2,g_3,g_2',g_3')\).    To recognize 
the second case we use the formula expressing that there is some 
\(P \in E_{g_2',g_3'}(L^{g_2,g_3}) \smallsetminus E_{g_2',g_3'}(\CC)\) and that for 
all \(Q \in E_{g_2',g_3'}(\lambda)\) there is some \(R \in E_{g_2',g_3'}(L^{g_2,g_3})\) 
so that either \([2] R = Q\) or \([2] R = Q + P\), where 
multiplication by two and addition are taken in the sense of the 
group law on \(E_{g_2',g_3'}\).  This gives the formula 
\(\rho_1(g_2,g_3,g_2',g_3')\).  Finally, \(\rho_2(g_2,g_3,g_2',g_3')\) may 
be taken to be the formula asserting that there are \(P\) and \(Q\) in 
\(E_{g_2',g_3'}(L^{g_2,g_3})\) so that for all \(R\) none of the equations
\([2] R = P\), \([2] R = Q\), or \([2]R = P + Q\) hold.
\end{proof}

\begin{cor}
\label{cor:def-cm-isog}
Each of the sets \(\CM\) and \(\Isog\) is definable in 
\(\CC(t)\). Hence, each of the structures \(\fCM = (\CC,+,-,\cdot,0,1,\CM)\) and 
\(\fCMI = (\CC,+,-,\cdot,0,1,\CM,\Isog)\) is interpretable in 
\(\CC(t)\).
\end{cor}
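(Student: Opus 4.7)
The plan is to assemble the corollary from Proposition~\ref{prop:rank-maps-elliptic} and Lemma~\ref{lem:definable-constants} via straightforward existential quantification. Recall from Section~\ref{sec:background-ec} that an elliptic curve \(E\) over \(\CC\) has complex multiplication if and only if \(\operatorname{End}(E) = \operatorname{Hom}(E,E)\) has rank two as an abelian group, and that two elliptic curves are isogenous precisely when \(\operatorname{Hom}(E_1,E_2)\) is nonzero, equivalently of rank one or two. Thus the predicates \(\rho_1\) and \(\rho_2\) already encode the geometric information needed; only the passage from the Weierstrass parameters \((g_2,g_3)\) to the \(j\)-invariant and the definability of the ambient field of constants remain.

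First, I would define the \(\CM\) locus by the formula
\[ \chi_{\CM}(j) \;:=\; j \in \CC \;\wedge\; (\exists g_2, g_3 \in \CC)\bigl[\Delta(g_2,g_3) \neq 0 \;\wedge\; \Delta(g_2,g_3) \cdot j = 12^3 g_2^3 \;\wedge\; \rho_2(g_2,g_3,g_2,g_3)\bigr]. \]
Lemma~\ref{lem:definable-constants} allows the quantifier \((\exists g_2, g_3 \in \CC)\) to be expressed in \(\CC(t)\); the polynomial conditions are quantifier-free; and Proposition~\ref{prop:rank-maps-elliptic} tells us \(\rho_2(g_2,g_3,g_2,g_3)\) holds exactly when \(\operatorname{End}(E_{g_2,g_3})\) has rank two, i.e.\ when \(E_{g_2,g_3}\) has CM. By Notation~\ref{nota:j-alg}, the condition \(\Delta\cdot j = 12^3 g_2^3\) is equivalent to \(j = j(E_{g_2,g_3})\). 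So \(\chi_{\CM}(\CC(t)) = \CM\).

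Next I would define \(\Isog\) by
\[ \chi_{\Isog}(j_1,j_2) \;:=\; j_1, j_2 \in \CC \;\wedge\; (\exists g_2,g_3,g_2',g_3' \in \CC)\bigl[\text{$\Delta$'s nonzero} \;\wedge\; \text{$j$'s match} \;\wedge\; \neg \rho_0(g_2,g_3,g_2',g_3')\bigr], \]
where the ``$j$'s match'' clause is the pair of equations \(\Delta(g_2,g_3)\cdot j_1 = 12^3 g_2^3\) and \(\Delta(g_2',g_3')\cdot j_2 = 12^3 (g_2')^3\). Again by Proposition~\ref{prop:rank-maps-elliptic}, \(\neg\rho_0\) says \(\operatorname{Hom}(E_{g_2,g_3},E_{g_2',g_3'})\) has positive rank, which is precisely the condition that the two curves are isogenous; and every isogeny factors through a finite cyclic quotient (up to the usual dualities), so the pairs \((j_1,j_2)\) caught by this formula are exactly those in \(\Isog\) as introduced in the introduction.

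Having produced definitions of \(\CM \subseteq \CC\) and \(\Isog \subseteq \CC^2\) inside \(\CC(t)\), the interpretations of \(\fCM\) and \(\fCMI\) in \(\CC(t)\) follow immediately: the universe \(\CC\) is definable by Lemma~\ref{lem:definable-constants}, the field operations are inherited, and the new predicates are definable by \(\chi_{\CM}\) and \(\chi_{\Isog}\). There is no serious obstacle here; the entire content of the corollary has already been placed on the table by the preceding lemma and proposition, and this proof amounts to bookkeeping plus an existential quantification to convert the parameters \((g_2,g_3)\) into the \(j\)-invariant.
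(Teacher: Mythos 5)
Your proof is correct and follows essentially the same route as the paper: existentially quantify over constant Weierstrass parameters, tie them to the \(j\)-invariant via \(j^{\text{alg}}\), and invoke the rank predicates of Proposition~\ref{prop:rank-maps-elliptic}. The only cosmetic differences are that you write the \(j\)-invariant condition with denominators cleared and use \(\neg\rho_0\) (conjoined with the constancy and nonvanishing-discriminant conditions) where the paper uses the equivalent disjunction \(\rho_1 \vee \rho_2\).
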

\begin{proof}
The set \(\CM\) is defined by the formula \[\exists u \exists v (\rho_2(u,v,u,v) \land x = j^\text{alg}(u,v))\]  
and \(\Isog\) is defined by \[\exists u \exists v \exists w \exists z (x = j^\text{alg}(u,v) \land y = j^\text{alg}(w,z) \land (\rho_1(u,v,w,z) \vee \rho_2(u,v,w,z))) \text{ .}\]
\end{proof}

\section{Stability of $\CM$}
\label{sec:stability}

The structure \(\fCM\) is not interpretable in 
\(\CC\) considered as a field.  Indeed, the algebraically 
closed field \(\CC\) is strongly minimal but the set 
\(\CM\) is a countably infinite set of algebraic integers, which 
means that it is both infinite and its complement is infinite.  
One might expect that some of the arithmetic of the algebraic 
integers could be recovered in \(\fCM\).  As 
the theory of the ring of all algebraic integers is itself 
decidable~\cite{vdD-alg-int}, this would not yield undecidability 
of \(\fCM\), but it would imply instability.  In this section, 
we observe that it follows from the 
Andr\'{e}-Oort conjecture for product of modular 
curves~\cite{Pila-AO} and a theorem of 
Casanovas 
and Ziegler~\cite{Casanovas-Ziegler-stable-theories-predicate}
on expansions of stable theories by predicates
that \(\fCM\) is stable and then using another 
part of the work of Casanovas and Ziegler 
and an effective version of the Andr\'{e}-Oort conjecture
due to Binyamini~\cite{Bin-effective-AO}, that 
the theory of \(\fCM\) is decidable.

\begin{nota}
\label{nota:induced}
Recall that for any \(\mathcal{L}\) structure \(\mathfrak{M}\)
and a nonempty subset \(A \subseteq M^n\) of the \(n^\text{th}\)
Cartesian power of the universe \(M\) of \(\mathfrak{M}\) 
(for some natural number \(n\)), the induced structure on 
\(A^\text{ind}\) on \(A\) is the structure having 
universe \(A\) and an \(m\)-ary 
relation \(R_\phi\) 
for each \(\mathcal{L}\) formula \(\phi\) in the 
variables \(x_{i,j}\) for \(1 \leq i \leq m\) and
\(1 \leq j \leq n\) to be interpreted at 
\(R_\phi^{A^\text{ind}} = A^m \cap \phi(\mathfrak{M})\).  
We refer to this associated relational language as 
\(\mathcal{L}^\text{ind}\).  Strictly speaking, we should 
indicate the arity \(n\) in the notation for the name of this
language, but we omit it.
\end{nota}

\begin{prop}
\label{prop:stable-cm}
The structure \(\CM^\text{ind}\) induced on the
set of \(j\)-invariants of elliptic curves with
complex multiplication from the field 
\((\CC,+,-,\cdot,0,1)\) of complex numbers
considered as structure for the language of rings
is stable.
\end{prop}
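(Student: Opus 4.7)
The plan is to combine the André-Oort theorem for products of modular curves, due to Pila, with the strong stability of the pure field $\CC$. André-Oort gives an explicit combinatorial description of the relations induced on $\CM$ from constructible subsets of $\CC^n$, and once this description is in hand, stability of the induced structure will be argued by direct type-counting, with the framework of Casanovas and Ziegler on expansions of stable theories by a predicate organizing the analysis.

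The first step I would carry out is to reduce $\mathcal{L}^\text{ind}$-definable relations to a concrete combinatorial form. Any such relation $R \subseteq \CM^n$ has the form $W \cap \CM^n$ for some constructible $W \subseteq \CC^n$; by Chevalley, this is a finite Boolean combination of traces $V \cap \CM^n$ with $V \subseteq \CC^n$ Zariski closed. Pila's theorem identifies the Zariski closure of each such $V \cap \CM^n$ as a finite union of special subvarieties of $\CC^n$, i.e.\ subvarieties cut out by atomic conditions of the form $x_i = c$ with $c \in \CM$ or $\Phi_N(x_i, x_j) = 0$ for some modular polynomial $\Phi_N$. An induction on the dimension of the Zariski closure then expresses every $\mathcal{L}^\text{ind}$-definable relation as a finite Boolean combination of such atomic predicates, possibly with additional parameters from $\CC$.

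The second step would be to verify that the resulting combinatorial structure on $\CM$ is stable, in fact $\omega$-stable, by direct type-counting. Each modular correspondence $R_N(x,y) := (\Phi_N(x,y) = 0)$ is a symmetric, locally finite binary relation on $\CM$; together, the family $\{R_N\}_N$ recovers the isogeny partition of $\CM$ into countably many classes indexed by imaginary quadratic fields, with within-class structure governed by the ideal lattices of the corresponding orders. A complete $1$-type over a countable parameter set $A \subseteq \CM$ is then determined by the isogeny class of a realization relative to the classes represented in $A$, together with its combinatorial position in that class; this leaves only countably many possibilities, so the induced theory is $\omega$-stable and hence stable. The same analysis generalizes to $n$-types, since the Zariski closure of any realizing tuple is pinned down by finitely many special-subvariety conditions.

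The main obstacle, as I see it, is the passage from \textit{``each individual $R_N$ is visibly stable''} to \textit{``the family $\{R_N\}_N$ together, and all finite Boolean combinations thereof, is stable''}. A single locally finite symmetric graph is always stable, but an infinite family of such graphs on the same vertex set can a priori interact to encode an infinite linear order. What should rule this out here is precisely the rigidity supplied by André-Oort together with the arithmetic of imaginary quadratic orders: each $\mathcal{L}^\text{ind}$-definable relation is built from only finitely many of the $R_N$, and the pairwise interactions between different $R_N$ are rigidly controlled by lattice inclusions of ideals. Making this uniformity precise, and translating it into the bound on the number of types required for $\omega$-stability, is where I expect the bulk of the work to lie.
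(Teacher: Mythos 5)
Your first step---invoking Pila's Andr\'{e}-Oort theorem to replace every relation induced on $\CM$ from a constructible subset of $\CC^n$ by a Boolean combination of the atomic conditions $x_i = c$ (with $c \in \CM$) and $\Phi_N(x_i,x_j)=0$---is exactly the paper's \Cref{prop:bi-definable}: $\CM^\text{ind}$ is bi-definable with the modular structure $\CM^\text{mod}$. Up to that point you and the paper agree. (One small correction: by Andr\'{e}-Oort no ``additional parameters from $\CC$'' appear; the constant coordinates of the special varieties arising as Zariski closures of $V \cap \CM^n$ are automatically CM points.)

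The gap is in the second step, and it is precisely the one you flag at the end. Counting types in $\CM^\text{mod}$ requires knowing what formulas \emph{with quantifiers ranging over $\CM$} define, and your assertion that a complete $1$-type over $A$ is determined by ``the isogeny class of a realization relative to $A$ together with its combinatorial position'' is exactly the claim that complete types are determined by quantifier-free data, i.e.\ quantifier elimination for $\CM^\text{mod}$. That is not automatic from local finiteness of each $\Phi_N$, and it is where the paper does its real work. It proves two things you would need: (i) the weakly special varieties are closed under coordinate projections (this uses the parameterization of special varieties via the $\operatorname{GL}_2(\mathbb{Q})^+$-action on $\mathfrak{h}$), so that $\CC$ with these as closed sets is a complete Zariski geometry and $\CC^\text{mod}$ eliminates quantifiers by Hrushovski--Zilber; and (ii) witnesses to existential formulas over parameters from $\CM$ can always be found \emph{inside} $\CM$, because a $0$-dimensional fiber of a special variety over a CM point consists of CM points (isogeny preserves CM), while a $1$-dimensional fiber is cofinite in $\CC$ and so meets the infinite set $\CM$. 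Together these give $\CM^\text{mod} \preceq \CC^\text{mod}$ and hence strong minimality (not merely $\omega$-stability) of the induced structure. Your proposal correctly names the danger---infinitely many locally finite graphs on one vertex set can in principle encode an order---but supplies no argument ruling it out; (i) and (ii) are that argument, and without them the type count is unsupported.
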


We will prove Proposition~\ref{prop:stable-cm}
by giving a more
precise description of the induced 
structure on \(\CM\) using modular polynomials.

Let us introduce a language for expressing 
the relations defined by modular polynomials and 
then discuss two structures in this language.

\begin{nota}
\label{nota:L-mod}
Let \(\mathcal{L}^\text{mod}\) be the relational language having 
an \(n\)-ary relation symbol \(R_X\) for each component \(X\)
of an 
embedded affine algebraic subvariety 
of affine \(n\)-space \(\mathbb{A}^n_{\mathbb{C}}\)
defined by a system of equations of the form \(\Phi_{\ell_k}(x_{i_k},x_{j_k}) = 0\) for
some sequence \((i_1,j_1,n_1), \ldots, (i_m,j_m,n_m)\) where 
\(1 \leq i_k \leq j_k \leq n\) and \(\ell_k \in \mathbb{Z}_+\) for \(1 \leq k \leq m\). 
We call such \(X\) \emph{special varieties}.  
A component of a variety for which
we further allow equations of 
the form \(x_i = c\) for 
\(c \in \CC\) is called 
\emph{weakly special}.

Let \(\mathbb{C}^\text{mod}\) be the \(\mathcal{L}^\text{mod}\)-structure with universe
\(\mathbb{C}\) in which for each special variety
\(X\) we interpret
\(R_X^{\mathbb{C}^\text{mod}}\) as \(X(\mathbb{C})\).  Let 
\(\CM^\text{mod}\) be the substructure of \(\CC^\text{mod}\) with universe \(\CM\).
\end{nota}

\begin{Rk}
An equation of the form \(\Phi_\ell(x,x)=0\) with 
\(\ell > 1\) defines a finite set of CM-points and more 
generally a pair of equations \(\Phi_\ell(x,y) = 0\) and 
\(\Phi_{\ell'}(x,y) = 0\) with \(\ell \neq \ell'\) will 
define a finite set of pairs of CM-points.  It follows
from these observations that if \(X \subseteq \mathbb{A}^n_\CC\)
is a special variety, then for each projection 
\(\rho_j:\mathbb{A}^n_\CC \to \mathbb{A}^1_\CC\) given 
by \(\rho_j(x_1, \ldots, x_n) = x_j\), either 
the image of \(X\) under \(\rho_j\) is infinite, or 
\(\rho_j(X) = \{\xi\}\) for some CM-point \(\xi\).
\end{Rk}

We can give a better description of the special varieties 
using Klein's \(j\)-function and the action of 
\(\operatorname{GL}_2(\mathbb{Q})^+\) on \(\mathfrak{h}\).

\begin{conv}
In what follows we will use 
\(i\) and \(j\) as indices.
We trust that this will cause
no confusion with 
\(i = \sqrt{-1}\) and 
the \(j\)-function.
\end{conv}

\begin{nota}
\label{nota:prespecial-combinatorial}
A \emph{pre-weakly-special
datum} is given by 
\begin{itemize}
    \item \(n \in \mathbb{Z}_+\),
    \item \(\pi_0 \subseteq \{1, \ldots, n\}\),
    \item \(\Pi\) a partition of \(\{1, \ldots, n\} 
    \smallsetminus \pi_0\), 
    \item \(\xi:\pi_0 \to \mathfrak{h}\), and 
    \item \( \gamma = (\gamma_\pi)_{\pi \in \Pi}\) where 
    \(\gamma_\pi:\pi^2 \to \operatorname{GL}_2(\mathbb{Q})^+\) 
    satisfies that for \(\{i,j,k\} \subseteq \pi \), 
    \(\gamma(i,i) = \operatorname{id}\) and 
    \(\gamma(i,k) = \gamma(j,k) \cdot \gamma(i,j)\).
\end{itemize}

If \(\xi\) satisfies that 
for all \(j \in \pi_0\) we
have, \([\mathbb{Q}(\xi(j)):\mathbb{Q}] = 2\),
then we call 
\((\pi_0,\Pi,\xi,\gamma)\) a
\emph{pre-special datum}.

We associate the complex analytic subvariety 
\(\mathfrak{X}_{(\pi_0,\Pi,\xi,\gamma)}\) of 
\(\mathfrak{h}^n\) defined by the equations 
\begin{itemize}
\item \(\tau_j = \xi(j)\) for \(j \in \pi_0\) and 
\item \(\gamma(i,j) \cdot \tau_i = \tau_j\) for 
\(\{i,j\} \subseteq \pi \in \Pi \)
\end{itemize}
\end{nota}

\begin{prop}
\label{prop:prespecial-parameterization}
Given a pre-weakly special
datum \((\pi_0,\Pi,\xi,\gamma)\), 
if we further choose \(i_\pi \in \pi\) for each 
\(\pi \in \Pi\), then the map 
\(\nu:\mathfrak{h}^{\Pi} \to \mathfrak{h}^n\) given by 
\(\nu((\tau_\pi)_{\pi \in \Pi}) = (\nu_1, \ldots, \nu_n)\) with
\begin{itemize}
    \item \(\nu_j = \xi(j)\) if \(j \in \pi_0\) and
    \item \(\nu_j = \gamma(i_\pi,j) \cdot \tau_\pi\) if 
    \(j \in \pi \in \Pi\)
\end{itemize}
is a bijection between \(\mathfrak{h}^\Pi\) and 
\(\mathfrak{X}_{(\pi_0,\Pi,\xi,\gamma)}\).
\end{prop}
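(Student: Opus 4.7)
The plan is to prove the three required properties of $\nu$ in turn, all of which will reduce to direct applications of the cocycle identity $\gamma(i,k) = \gamma(j,k) \cdot \gamma(i,j)$ and the normalization $\gamma(i,i) = \operatorname{id}$. There is no real obstacle here; the content of the proposition is essentially that the definition of $\mathfrak{X}_{(\pi_0,\Pi,\xi,\gamma)}$ is a redundant presentation whose independent parameters are exactly one complex number per block $\pi$ of $\Pi$.

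First I would verify that $\nu$ lands in $\mathfrak{X}_{(\pi_0,\Pi,\xi,\gamma)}$. For $j \in \pi_0$ the equation $\tau_j = \xi(j)$ holds by definition of $\nu_j$. For $\{i,j\} \subseteq \pi \in \Pi$ we must check $\gamma(i,j) \cdot \nu_i = \nu_j$, i.e.\
\[ \gamma(i,j) \cdot \gamma(i_\pi,i) \cdot \tau_\pi \;=\; \gamma(i_\pi,j) \cdot \tau_\pi \text{ .} \]
Applying the cocycle identity with the triple $(i_\pi, i, j)$ gives exactly $\gamma(i_\pi,j) = \gamma(i,j) \cdot \gamma(i_\pi,i)$, so the equality holds as an identity in $\operatorname{GL}_2(\mathbb{Q})^+$, hence everywhere on $\mathfrak{h}^\Pi$.

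Next I would check injectivity by evaluating $\nu$ at the distinguished coordinates. If $\nu((\tau_\pi)_\pi) = \nu((\tau'_\pi)_\pi)$, then for each $\pi \in \Pi$ inspection of the $i_\pi$-th coordinate gives $\gamma(i_\pi,i_\pi) \cdot \tau_\pi = \gamma(i_\pi,i_\pi) \cdot \tau'_\pi$, and since $\gamma(i_\pi,i_\pi) = \operatorname{id}$ this forces $\tau_\pi = \tau'_\pi$.

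Finally, for surjectivity I would take an arbitrary point $(\nu_1, \ldots, \nu_n) \in \mathfrak{X}_{(\pi_0,\Pi,\xi,\gamma)}$ and set $\tau_\pi := \nu_{i_\pi}$ for each $\pi \in \Pi$. For $j \in \pi_0$ the defining equation gives $\nu_j = \xi(j)$, matching the definition of $\nu$. For $j \in \pi \in \Pi$, the defining equation $\gamma(i_\pi,j) \cdot \tau_{i_\pi} = \tau_j$ of $\mathfrak{X}$, applied at the point $(\nu_1,\ldots,\nu_n)$, gives $\nu_j = \gamma(i_\pi,j) \cdot \nu_{i_\pi} = \gamma(i_\pi,j) \cdot \tau_\pi$, which again matches. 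Hence $\nu((\tau_\pi)_\pi) = (\nu_1,\ldots,\nu_n)$, completing the proof.
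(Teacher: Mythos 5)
Your proof is correct and is exactly the verification the paper leaves to the reader (the paper's entire proof is the one-line remark that $\nu$ expresses $\mathfrak{X}_{(\pi_0,\Pi,\xi,\gamma)}$ as the graph of a function). Your three steps --- well-definedness via the cocycle identity $\gamma(i_\pi,j)=\gamma(i,j)\cdot\gamma(i_\pi,i)$, injectivity via the normalization $\gamma(i_\pi,i_\pi)=\operatorname{id}$, and surjectivity by reading off $\tau_\pi := \nu_{i_\pi}$ --- are precisely the intended argument.
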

\begin{proof}
We leave it to the reader to verify that the map \(\nu\) simply expresses \(\mathfrak{X}_{(\pi_0,\Pi,\xi,\gamma)}\)
as the graph of a function. 
\end{proof}

\begin{prop}
\label{prop:pre-special-combinatorial-analytic-to-special}
For any 
pre-(weakly)-special datum
\((\pi_0,\Pi,\xi,\gamma)\), 
the restriction of the map \(j^{\times n}:\mathfrak{h}^n 
\to \CC^n\) given by \((\tau_1, \ldots, \tau_n) 
\mapsto (j(\tau_1), \ldots, j(\tau_n))\) to 
\(\mathfrak{X}_{(\pi_0,\Pi,\xi,\gamma)}\) 
is surjective onto 
the \(\CC\)-points of a (weakly) special variety we 
denote as \(X_{(\pi_0,\Pi,\xi,\gamma)}\).
\end{prop}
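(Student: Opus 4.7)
The plan is to produce an explicit (weakly) special variety $X_{(\pi_0,\Pi,\xi,\gamma)}$ cut out by modular-polynomial equations and to check that $j^{\times n}(\mathfrak{X}_{(\pi_0,\Pi,\xi,\gamma)})$ equals its set of $\CC$-points. After choosing a representative $i_\pi \in \pi$ for each $\pi \in \Pi$, I would apply the parameterization $\nu:\mathfrak{h}^\Pi \to \mathfrak{X}_{(\pi_0,\Pi,\xi,\gamma)}$ from Proposition~\ref{prop:prespecial-parameterization}. The composition $j^{\times n} \circ \nu$ then has a transparent product structure: on each coordinate $j \in \pi_0$ it is constant with value $j(\xi(j))$, while on each block $\pi \in \Pi$ it sends $\tau_\pi \in \mathfrak{h}$ to $(j(\gamma_\pi(i_\pi,k)\cdot\tau_\pi))_{k \in \pi}$.

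Next I would introduce the candidate variety $V \subseteq \mathbb{A}^n_{\CC}$ defined by $x_j = j(\xi(j))$ for $j \in \pi_0$ together with $\Phi_{\ell(\gamma_\pi(i,k))}(x_i,x_k) = 0$ for every pair $\{i,k\} \subseteq \pi \in \Pi$. These are weakly-special equations. In the fully special case, each $j(\xi(j))$ is a CM value and is therefore a root of $\Phi_{m_j}(x,x) = 0$ for some integer $m_j > 1$ (any nontrivial element of the endomorphism order supplies such an $m_j$); replacing the constant equations by these diagonal modular-polynomial equations keeps $V$ inside the special language, and one still obtains the correct $\pi_0$-values by selecting the right component of the resulting zero-dimensional intersection. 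The inclusion $j^{\times n}(\mathfrak{X}_{(\pi_0,\Pi,\xi,\gamma)}) \subseteq V(\CC)$ then follows directly from the defining property of the modular polynomials recalled in Notation~\ref{nota:modular-poly}. I would take $X_{(\pi_0,\Pi,\xi,\gamma)}$ to be the irreducible component of $V$ containing the image.

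By the product structure noted above, proving that the image fills $X_{(\pi_0,\Pi,\xi,\gamma)}(\CC)$ reduces block-by-block to showing that, for each $\pi \in \Pi$, the holomorphic map $f_\pi:\mathfrak{h} \to \CC^\pi$ sending $\tau$ to $(j(\gamma_\pi(i_\pi,k)\cdot\tau))_{k \in \pi}$ surjects onto the $\CC$-points of the one-dimensional irreducible component $Y_\pi$ of the subvariety of $\CC^\pi$ cut out by the modular-polynomial equations indexed by $\pi$ that contains $f_\pi(\mathfrak{h})$. Using the cocycle condition on $\gamma_\pi$, this $f_\pi$ factors through $\mathfrak{h}/G_\pi$, where $G_\pi := \bigcap_{k \in \pi}\gamma_\pi(i_\pi,k)^{-1}\operatorname{SL}_2(\mathbb{Z})\gamma_\pi(i_\pi,k)$ is a congruence subgroup of $\operatorname{SL}_2(\mathbb{Z})$ of finite index. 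The resulting smooth affine modular curve $\mathfrak{h}/G_\pi$ maps as a birational morphism onto $Y_\pi$, parameterizing the chain of cyclic isogenies specified by $\gamma_\pi$.

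The main obstacle I anticipate is this last $\CC$-point surjectivity, as $Y_\pi$ will typically be singular — the image plane curves $V(\Phi_\ell)$ are notoriously non-normal for most $\ell$ — so Zariski's main theorem does not directly apply to the birational morphism $\mathfrak{h}/G_\pi \to Y_\pi$. I would handle this by identifying $\mathfrak{h}/G_\pi$ with the normalization of $Y_\pi$ in the category of affine varieties and invoking the general fact that the normalization morphism of an irreducible affine variety is surjective on $\CC$-points. Combined with the surjectivity of $j:\mathfrak{h}\twoheadrightarrow\CC$ and the dominance of the projection of $Y_\pi$ to the $i_\pi$-coordinate, this forces $f_\pi(\mathfrak{h}) = Y_\pi(\CC)$, completing the argument.
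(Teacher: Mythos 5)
Your proposal is correct, and its first half (the candidate variety $V$ cut out by $x_j=j(\xi(j))$ and $\Phi_{\ell(\gamma_\pi(i,k))}(x_i,x_k)=0$, the replacement of constant equations by components of $\Phi_m(x,x)=0$ in the fully special case, and the choice of $X_{(\pi_0,\Pi,\xi,\gamma)}$ as the component of $V$ containing the irreducible image) coincides with what the paper does. Where you genuinely diverge is the surjectivity step, which is the heart of the matter. The paper argues that for a selector set $J$ meeting each block $\pi$ in the single point $i_\pi$, the composite $\rho_J\circ j^{\times n}$ maps $\mathfrak{X}_{(\pi_0,\Pi,\xi,\gamma)}$ onto all of $\CC^\Pi$ (by surjectivity of $j$), and then asserts that the image fills $Y(\CC)$; this is stated quite tersely, since surjectivity of a finite projection does not by itself control the individual fibers. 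You instead work block by block, descend $f_\pi$ to the affine modular curve $\mathfrak{h}/G_\pi$ for the finite-index group $G_\pi=\bigcap_k\gamma_\pi(i_\pi,k)^{-1}\operatorname{SL}_2(\mathbb{Z})\gamma_\pi(i_\pi,k)$, and invoke surjectivity of the normalization $\mathfrak{h}/G_\pi\to Y_\pi$. This buys you a genuinely complete argument at exactly the point where the paper is thinnest: it accounts for every $\CC$-point of $Y_\pi$, including singular points and points over which the fiber of $\rho_J$ has several branches. Two small points you should make explicit: (i) birationality of $\mathfrak{h}/G_\pi\to Y_\pi$ needs the observation that for $\tau$ with trivial stabilizer in $\operatorname{PGL}_2(\mathbb{Q})^+$ the fiber of $f_\pi$ through $\tau$ is exactly the $\langle G_\pi,-I\rangle$-orbit (if $g_k\tau=g_{k'}\tau$ with $g_k\in\gamma_k^{-1}\operatorname{SL}_2(\mathbb{Z})\gamma_k$, then $g_k=\pm g_{k'}$); and (ii) identifying $\mathfrak{h}/G_\pi$ with the full normalization of the affine curve $Y_\pi$, rather than a proper open subset of it, uses that every cusp of the compactified modular curve maps to a point with some coordinate equal to $\infty$ (because $j$ has a pole at each cusp), so that the map $\mathfrak{h}/G_\pi\to Y_\pi$ is finite and no affine point of $Y_\pi$ is lost. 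With those two remarks added, your argument is complete.
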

\begin{proof}
Using the characterization of the modular polynomials from 
\Cref{nota:modular-poly}, we see that the image of 
\(\mathfrak{X}_{(\pi_0,\Pi,\xi,\gamma)}\) under 
\(j^{\times n}\) is contained in the algebraic variety 
\(X\) defined by \(x_i = j(\xi(i))\) for \(i \in \pi_0\) and 
\(\Phi_{\ell(\gamma_\pi)}(x_i,x_j) = 0\) for \(\{i, j\} \in 
\pi \in \Pi\).  As presented, each component of 
\(X\) is weakly special.  If we presume as well that 
\((\pi_0,\Pi,\xi,\gamma)\) is a pre-special datum, then 
the equations \(x_i = j(\xi(i))\) set the \(i^\text{th}\) coordinate
equal to a CM-point and may be seen as picking a component of a variety
defined by \(\Phi_\ell(x_i,x_i) = 0\) for suitable choice of \(\ell\).  
That is, the components of \(X\) are special.

By \Cref{prop:prespecial-parameterization}, 
\(\mathfrak{X}_{(\pi_0,\Pi,\xi,\gamma)}\) is an irreducible 
complex analytic space.  Hence, the closure of its 
image under \(j^{\times n}\) is a component \(Y\) of \(X\).  
It follows from \Cref{prop:prespecial-parameterization}, that 
if \(J \subseteq \{1, \ldots, n \}\) is any selector set for 
\(\Pi\) and \(\rho_J:\mathbb{A}^n_\CC \to \mathbb{A}^{\Pi}_\CC\) 
is the projection defined by \((\rho_J)_\pi(x_1, \ldots, x_n) = x_j\) 
where \(\{ j \} = J \cap \pi\), then 
\(\rho_J( j^{\times n} \mathfrak{X}_{(\pi_0,\Pi,\xi,\gamma)}) = \CC^\Pi\).
It follows that \( j^{\times n} (\mathfrak{X}_{(\pi_0,\Pi,\xi,\gamma)}) = 
Y(\CC)\).  That is, we may take \(Y = X_{(\pi_0,\Pi,\xi,\gamma)}\).
\end{proof}

With the next result we see that the (weakly) special varieties are
exactly those obtained as images under the \(j\)-function of 
the complex analytic varieties \(\mathfrak{X}_{(\pi_0,\Pi,\xi,\gamma)}\) 
for some pre-(weakly)-special datum \((\pi_0,\Pi,\xi,\gamma)\).

\begin{prop}
\label{prop:special-realized-as-image-of-combinatorial}
For each \(n \in \mathbb{Z}_+\) and (weakly) special 
variety \(X \subseteq \mathbb{A}^n_\CC\) there is 
a pre-(weakly)-special datum
\((\pi_0,\Pi,\xi,\gamma)\) so
that
\(X = X_{(\pi_0,\Pi,\xi,\gamma)}\).
\end{prop}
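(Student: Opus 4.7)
The plan is to lift $X$ analytically to $\mathfrak{h}^n$, read off a pre-(weakly)-special datum from the shape of a single connected component of the lift, and then verify that this datum reconstructs $X$ on the nose.

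First I would write $X$ as an irreducible component of a variety $Y \subseteq \mathbb{A}^n_\CC$ cut out by modular equations $\Phi_{\ell_s}(x_{i_s},x_{j_s}) = 0$ (and, in the weakly special case, additional equations $x_i = c_i$). By Notation~\ref{nota:modular-poly} and the discussion preceding it, the analytic preimage of each modular equation under $j^{\times n}$ is a countable disjoint union of graphs $\{\tau_{j_s} = \gamma \cdot \tau_{i_s}\}$ indexed by $\gamma \in \operatorname{GL}_2(\mathbb{Q})^+$ with $\ell(\gamma) = \ell_s$, while the preimage of $x_i = c_i$ is a discrete $\operatorname{SL}_2(\mathbb{Z})$-orbit. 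Consequently $(j^{\times n})^{-1}(X)$ decomposes as a countable union of connected analytic components, each cut out by equations of the form $\tau_i = \xi_i$ or $\tau_k = \gamma \cdot \tau_i$ for fixed $\xi_i \in \mathfrak{h}$ and fixed $\gamma \in \operatorname{GL}_2(\mathbb{Q})^+$. I then pick any such component $\mathfrak{X}$; since $X$ is irreducible, the map $j^{\times n}\colon \mathfrak{X} \to X$ is surjective by dimension count.

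Next I read off the datum from $\mathfrak{X}$. Let $\pi_0$ be the set of indices $j$ for which $\tau_j$ is constant on $\mathfrak{X}$, and set $\xi(j) := \tau_j|_\mathfrak{X}$. On $\{1,\dots,n\} \smallsetminus \pi_0$ I declare $i \sim k$ iff some identity $\tau_k = \gamma \cdot \tau_i$ with $\gamma \in \operatorname{GL}_2(\mathbb{Q})^+$ holds identically on $\mathfrak{X}$; this is an equivalence relation (transitivity by composition, symmetry by inversion), and $\Pi$ is its partition. Choosing a representative $i_\pi \in \pi$ in each class together with elements $\gamma_k$ realizing $\tau_k = \gamma_k \cdot \tau_{i_\pi}$ on $\mathfrak{X}$, the assignment $\gamma(i,k) := \gamma_k \gamma_i^{-1}$ automatically verifies the cocycle identities of Notation~\ref{nota:prespecial-combinatorial}. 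In the special (as opposed to merely weakly special) case, the remark following Notation~\ref{nota:L-mod} ensures that each constant value $j(\xi(j))$ with $j \in \pi_0$ is a CM-point, so that $\xi(j)$ is quadratic imaginary and the datum is pre-special.

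Finally I would observe that by construction $\mathfrak{X}$ satisfies every defining equation of $\mathfrak{X}_{(\pi_0,\Pi,\xi,\gamma)}$, while Proposition~\ref{prop:prespecial-parameterization} exhibits the latter as an irreducible complex analytic variety of dimension $|\Pi|$; comparing dimensions gives $\mathfrak{X} = \mathfrak{X}_{(\pi_0,\Pi,\xi,\gamma)}$, and applying $j^{\times n}$ together with Proposition~\ref{prop:pre-special-combinatorial-analytic-to-special} yields $X = X_{(\pi_0,\Pi,\xi,\gamma)}$. The step I expect to be the main obstacle is the initial analytic decomposition, namely the assertion that connectedness of $\mathfrak{X}$ collapses each modular constraint to a single-$\gamma$ identity holding everywhere on $\mathfrak{X}$ (rather than allowing monodromy to mix distinct $\gamma$'s along $\mathfrak{X}$); this is precisely what makes the partition $\Pi$ and the cocycle $\gamma$ well-defined and is the geometric heart of the statement.
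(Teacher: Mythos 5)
Your argument is essentially the paper's: both lift $X$ through $j^{\times n}$, select one piece of the preimage, read off $(\pi_0,\Pi,\xi,\gamma)$ from the identities holding on that piece, and invoke Propositions~\ref{prop:prespecial-parameterization} and~\ref{prop:pre-special-combinatorial-analytic-to-special} to recover $X$. The only real difference is cosmetic: the paper defines $\pi_0$ and the relation $i\sim j$ on the algebraic side (constancy of $\rho_i(X)$, vanishing of some $\Phi_\ell(x_i,x_j)$ on $X$, with a short argument that the $\ell$ witnessing $i\sim j$ is unique) and only then lifts, writing $(j^{\times n})^{-1}$ of the cut-out variety directly as a union over $\operatorname{SL}_2(\mathbb{Z})^n$ of translates of a fixed graph and choosing a nonempty one, whereas you lift first and read everything off analytically. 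The step you flag as the main obstacle does close, and the cleanest way to close it is to take $\mathfrak{X}$ to be an \emph{irreducible component} of the preimage rather than merely a connected one: for each modular constraint, the sets $\{\tau\in\mathfrak{X}:\tau_{j_s}=\gamma\cdot\tau_{i_s}\}$, as $\gamma$ ranges over the relevant double coset, form a countable family of closed analytic subsets covering $\mathfrak{X}$, so by Baire category one of them contains a nonempty open subset of the smooth locus of $\mathfrak{X}$ and hence, by irreducibility and the identity principle, equals all of $\mathfrak{X}$. That pins down a single $\gamma$ per constraint on all of $\mathfrak{X}$ and makes $\Pi$ and the cocycle well defined; the paper achieves the same end by fiat in its choice of the tuple $(\epsilon_1,\ldots,\epsilon_n)$, so on this point your write-up is, if anything, the more honest of the two.
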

\begin{proof}
Let \(\pi_0 := \{ i \in \{ 1, \ldots, n \} :  \dim \rho_i(X) = 0 \}\) where 
\(\rho_i:\mathbb{A}^n_\CC \to \mathbb{A}^1_\CC\) is the projection 
\((x_1, \ldots, x_n) \mapsto x_i\).  As \(X\) is irreducible, if 
\(\dim \rho_i(X) = 0 \), then there is some \(a_i \in \CC\) with 
\(\rho_i(X) = \{ a_i \}\).   For each \(i \in \pi_0\) pick some 
\(\widetilde{a}_i \in \mathfrak{h}\) with \(j(\widetilde{a}_i) = a_i\). 
Note that if 
\(X\) is special, then necessarily each \(\widetilde{a}_i\) is quadratic imaginary.

Define an equivalence relation \(\sim\) on \(\{1, \ldots, n\} 
\smallsetminus \pi_0\) by \(i \sim j\) if and only if there is some 
\(\ell \in \mathbb{Z}_+\) so that \(\Phi_\ell(x_i,x_j) = 0\) holds on 
\(X\).  

Note that if \(i \sim j \), then there is exactly one 
\(\ell := \ell(i,j)\) for which 
\(\Phi_\ell(x_i,x_j) = 0 \) holds on \(X\) for if \(\Phi_k(x_i,x_j)\) were also 
to hold for \(k \neq \ell\), then for every point in 
\(X\), \(x_i\) would be CM, implying that \(i \in \pi_0\) contrary to 
our definition of \(\sim\).  

Let \(\Pi\) be the partition associated to \(\sim\).  For each 
pair \((i,j)\) with \(i \sim j\), pick some 
\(\delta(i,j) \in \operatorname{GL}_2(\mathbb{Q})^+\) with 
\(\ell(\delta(i,j)) = \ell(i,j)\).  Then the preimage of \(X\) under
\(j^{\times n}\) is equal to 
\[\bigcup_{(\epsilon_1, \ldots, \epsilon_n) \in 
\operatorname{SL}_2(\mathbb{Z})^n}   \{ (\tau_1, \ldots, \tau_n) \in 
\mathfrak{h}^n :  \tau_i = \epsilon_i \cdot a_i \text{ for } i \in \pi_0 
\text{ and } \tau_j = \epsilon_j \delta(i,j) \epsilon_i \cdot \tau_i 
\text{ for } i \sim j \} \text{ .}\]
Pick \((\epsilon_1, \ldots, \epsilon_n)\) so that corresponding
component is nonempty. 

 Define \(\xi:\pi_0 \to \mathfrak{h}\) by \(\xi(i) := \epsilon_i \cdot \widetilde{a}_i\)
 and for \(\pi \in \Pi\) and \(\{ i, j \} \in \pi\) set \(\gamma_\pi(i,j) := 
 \epsilon_j \delta(i,j) \epsilon_i\).  By the consistency of the analytic equations, 
 \(\gamma\) satisfies the cocycle conditions.  Visibly, 
 \(X(\CC) = j^{\times n}(\mathfrak{X}_{(\pi_0,\Pi,\xi,\gamma)})\). 

\end{proof}

Let us consider images of pre-(weakly)-special under projections.

\begin{prop}
\label{prop:image-of-prespecial}
We are given
a pre-(weakly)-special
datum 
\((\pi_0,\Pi,\xi,\gamma)\) 
with \(n > 1\).  Write   
\(\varpi:\mathfrak{h}^n \to \mathfrak{h}^{n-1}\) for
the projection map given 
by \((\tau_1, \ldots, \tau_n) \mapsto (\tau_1, 
\ldots, \tau_{n-1})\).  Then there is a pre-(weakly)-special
datum
\((\pi_0',\Pi',\xi',\gamma')\) so that 
the restriction of \(\varpi\) to 
\(\mathfrak{X}_{(\pi_0,\Pi,\xi,\gamma)}\) is surjective
onto \(\mathfrak{X}_{(\pi_0',\Pi',\xi',\gamma')}\) 
\end{prop}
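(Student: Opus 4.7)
The plan is to proceed by cases according to the role of the index $n$ in the given pre-(weakly)-special datum, constructing the target datum $(\pi_0',\Pi',\xi',\gamma')$ by explicitly ``deleting'' $n$ from the combinatorial data. By \Cref{prop:prespecial-parameterization}, $\mathfrak{X}_{(\pi_0,\Pi,\xi,\gamma)}$ is cut out by independent equations indexed either by elements of $\pi_0$ or by pairs inside a block of $\Pi$, so the image under $\varpi$ is controlled coordinate by coordinate.

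First, if $n \in \pi_0$, then the equation $\tau_n = \xi(n)$ only constrains the coordinate being projected away, and I set $\pi_0' := \pi_0 \setminus \{n\}$, $\Pi' := \Pi$, $\xi' := \xi|_{\pi_0'}$, and $\gamma' := \gamma$. Second, if $n$ lies in a singleton block $\pi = \{n\} \in \Pi$, the coordinate $\tau_n$ is free; I remove this block by setting $\Pi' := \Pi \setminus \{\{n\}\}$ and leaving $\pi_0,\xi,\gamma|_{\Pi'}$ unchanged. Third, if $n \in \pi \in \Pi$ with $|\pi|>1$, I replace $\pi$ by $\pi':=\pi\setminus\{n\}$ inside the partition, keep $\pi_0' := \pi_0$ and $\xi' := \xi$, and restrict the cocycle $\gamma_\pi$ to $(\pi')^2$ to obtain $\gamma'_{\pi'}$, leaving all other blocks untouched.

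Next, I would verify that $(\pi_0',\Pi',\xi',\gamma')$ is a pre-(weakly)-special datum. The cocycle identities $\gamma'(i,i) = \operatorname{id}$ and $\gamma'(i,k) = \gamma'(j,k)\cdot\gamma'(i,j)$ are inherited automatically from the original $\gamma$ by restriction, and in the pre-special case the values of $\xi'$ remain quadratic imaginary because $\xi'$ is a restriction of $\xi$.

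Finally, to establish surjectivity of $\varpi$ restricted to $\mathfrak{X}_{(\pi_0,\Pi,\xi,\gamma)}$ onto $\mathfrak{X}_{(\pi_0',\Pi',\xi',\gamma')}$, I would lift an arbitrary $(\tau_1,\ldots,\tau_{n-1}) \in \mathfrak{X}_{(\pi_0',\Pi',\xi',\gamma')}$ by specifying $\tau_n$ as follows: in the first case, put $\tau_n := \xi(n)$; in the second case, choose any $\tau_n \in \mathfrak{h}$; in the third case, pick $i \in \pi \setminus \{n\}$ and set $\tau_n := \gamma(i,n)\cdot \tau_i$, where the cocycle identities on $\gamma$ ensure independence from the choice of $i$ and guarantee that the lifted tuple satisfies all the defining equations of $\mathfrak{X}_{(\pi_0,\Pi,\xi,\gamma)}$. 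The argument is essentially combinatorial bookkeeping; the only subtlety is correctly restricting the cocycle in the third case, and this should not present any real obstacle.
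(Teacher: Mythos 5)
Your proposal is correct and follows exactly the same case division and constructions as the paper's proof (which simply lists the three cases and leaves the verification to the reader). Your explicit lifting argument for surjectivity fills in precisely the routine details the paper omits.
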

\begin{proof}
There are three cases to consider:
\begin{itemize}
\item \(n \in \pi_0\)
\item \(\{ n \} \in \Pi\)
\item There is some \(\eta \in \Pi\) with \(|\eta| > 1\) and \(n \in \eta\) 
\end{itemize}

In the first case we set \(\pi_0' := \pi_0 \smallsetminus \{ n \}\), \(\Pi' := \Pi\), 
\(\xi' := \xi \upharpoonright \pi_0'\),  and \(\gamma' := \gamma\).  

In the second case we set \(\pi_0' := \pi_0\), \(\Pi' := \Pi \smallsetminus \{ \{n\} \}\), 
\(\xi' := \xi\), and \(\gamma' := ( (\gamma_\pi)_{\pi \in \Pi'})\).

In the last case we set  \(\pi_0' := \pi_0\), \(\Pi' := (\Pi \smallsetminus \{ \eta \}) \cup (\eta \smallsetminus  \{n\} )\),  
\(\xi' := \xi\), and \(\gamma' := ( (\gamma'_\pi)_{\pi \in \Pi'}) \) where
\(\gamma'_\pi = \gamma_\pi\) for \(\pi \in \Pi \smallsetminus \{\eta\}\) and 
\(\gamma_{\eta \smallsetminus \{n\}} = \gamma_\eta \upharpoonright (\eta \smallsetminus \{n\})^2\).

We leave it to the reader to verify that these choices work.
\end{proof}

Combining \Cref{prop:pre-special-combinatorial-analytic-to-special}, \Cref{prop:special-realized-as-image-of-combinatorial}, and \Cref{prop:image-of-prespecial}, we 
may compute images of (weakly) special varieties under projections.

\begin{cor}
\label{cor:proj-special}
If \(n > 1\), 
\(X \subseteq \mathbb{A}^n_\CC\) is a (weakly) special 
variety, and \(\rho:\mathbb{A}^n_\CC \to \mathbb{A}^{n-1}_\CC\)
is the projection to the first \(n-1\) coordinates, 
then there is a (weakly) special variety 
\(Y \subseteq \mathbb{A}^{n-1}_\CC\) so that 
\(\rho(X(\CC)) = Y(\CC)\).
\end{cor}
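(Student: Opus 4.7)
The plan is to assemble the three preceding propositions along a commutative square linking the upper half-plane picture and the affine picture. Starting from the given (weakly) special variety $X \subseteq \mathbb{A}^n_\CC$, Proposition~\ref{prop:special-realized-as-image-of-combinatorial} furnishes a pre-(weakly)-special datum $(\pi_0,\Pi,\xi,\gamma)$ with $X = X_{(\pi_0,\Pi,\xi,\gamma)}$, and Proposition~\ref{prop:pre-special-combinatorial-analytic-to-special} identifies $X(\CC)$ with $j^{\times n}(\mathfrak{X}_{(\pi_0,\Pi,\xi,\gamma)})$.

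Next I would note that the coordinate projection $\varpi \colon \mathfrak{h}^n \to \mathfrak{h}^{n-1}$ of Proposition~\ref{prop:image-of-prespecial} and the coordinate projection $\rho \colon \mathbb{A}^n_\CC \to \mathbb{A}^{n-1}_\CC$ obviously intertwine the coordinatewise $j$-function, i.e.\ $\rho \circ j^{\times n} = j^{\times (n-1)} \circ \varpi$. Combining this with the previous identification yields
\[
\rho(X(\CC)) \;=\; \rho\bigl(j^{\times n}(\mathfrak{X}_{(\pi_0,\Pi,\xi,\gamma)})\bigr) \;=\; j^{\times (n-1)}\bigl(\varpi(\mathfrak{X}_{(\pi_0,\Pi,\xi,\gamma)})\bigr).
\]
Proposition~\ref{prop:image-of-prespecial} then produces a pre-(weakly)-special datum $(\pi_0',\Pi',\xi',\gamma')$ with $\varpi(\mathfrak{X}_{(\pi_0,\Pi,\xi,\gamma)}) = \mathfrak{X}_{(\pi_0',\Pi',\xi',\gamma')}$, and a second application of Proposition~\ref{prop:pre-special-combinatorial-analytic-to-special} to this new datum produces a (weakly) special variety $Y := X_{(\pi_0',\Pi',\xi',\gamma')} \subseteq \mathbb{A}^{n-1}_\CC$ whose $\CC$-points equal $j^{\times (n-1)}(\mathfrak{X}_{(\pi_0',\Pi',\xi',\gamma')})$. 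Chaining the three equalities gives $\rho(X(\CC)) = Y(\CC)$, as required.

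There is no substantive obstacle: the diagram commutes by inspection, and the qualifier \emph{special} versus \emph{weakly special} transfers correctly because in each of the three cases of Proposition~\ref{prop:image-of-prespecial} the new function $\xi'$ is either a restriction of $\xi$ or equal to $\xi$, so the quadratic-imaginary condition is inherited. The only point worth double-checking when writing up is the use of surjectivity (as opposed to mere containment) in Propositions~\ref{prop:pre-special-combinatorial-analytic-to-special} and~\ref{prop:image-of-prespecial}, since the conclusion asserts the equality $\rho(X(\CC)) = Y(\CC)$ rather than a one-sided inclusion.
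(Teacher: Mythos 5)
Your proof is correct and follows the same route as the paper: it represents $X$ via a pre-(weakly)-special datum (Proposition~\ref{prop:special-realized-as-image-of-combinatorial}), pushes the datum through the projection on $\mathfrak{h}^n$ (Proposition~\ref{prop:image-of-prespecial}), and descends back via $j^{\times(n-1)}$ (Proposition~\ref{prop:pre-special-combinatorial-analytic-to-special}), using the commutativity $\rho \circ j^{\times n} = j^{\times(n-1)} \circ \varpi$ exactly as in the paper's diagram.
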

\begin{proof}
By \Cref{prop:special-realized-as-image-of-combinatorial}, we may find 
a pre-(weakly)-special datum \((\pi_0,\Pi,\xi,\gamma)\) so that we may 
express \(X = X_{(\pi_0,\Pi,\xi,\gamma)}\) 
with \(X(\CC) = j^{\times n}(\mathfrak{X}_{(\pi_0,\Pi,\xi,\gamma)})\). 
By \Cref{prop:image-of-prespecial}, there is a pre-(weakly)-special
datum \((\pi_0',\Pi',\xi',\gamma')\) so that the projection 
\(\varpi:\mathfrak{h}^n \to \mathfrak{h}^{n-1}\) onto the first \(n-1\) 
coordinates takes \(\mathfrak{X}_{(\pi_0,\Pi,\xi,\gamma)}\) onto 
\(\mathfrak{X}_{(\pi_0',\Pi',\xi',\gamma')}\).  By 
\Cref{prop:pre-special-combinatorial-analytic-to-special}, 
the image of \(\mathfrak{X}_{(\pi_0',\Pi',\xi',\gamma')}\) under
\(j^{\times (n-1)}\) is the set of \(\CC\)-points of a 
(weakly) special variety \(Y = X_{(\pi_0',\Pi',\xi',\gamma')} \subseteq 
\mathbb{A}^{n-1}_\CC\).
From the commutativity of the diagram 
\[
\begin{tikzcd}
& \mathfrak{X}_{(\pi_0',\Pi',\xi',\gamma')} \ar[rr,hook] \ar[ddd,two heads] & & \mathfrak{h}^{n-1} \ar[ddd,"{j^{\times (n-1)}}"] \\
\mathfrak{X}_{(\pi_0,\Pi,\xi,\gamma)}  \ar[rr,hook] \ar[ddd,two heads] \ar[ru,two heads] 
& & \mathfrak{h}^n \ar[ddd,"{j^{\times n}}"] \ar[ru,"\varpi"] & \\
& & & \\ 
& Y(\mathbb{C}) \ar[rr,hook]  & & \mathbb{C}^{n-1}  \\
X(\mathbb{C})   \ar[rr,hook]  \ar[ru] 
& & \mathbb{C}^n  \ar[ru,"\rho"] &
\end{tikzcd}
\]
we conclude that the projection \(X(\CC) \to Y(\CC)\) is onto.
\end{proof}

\begin{prop}
\label{prop:Zariski-special}
The set \(\CC\) given together with the finite unions of weakly special 
subvarieties of each Cartesian power as its closed sets forms a complete
Zariski geometry.
\end{prop}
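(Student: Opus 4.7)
The plan is to verify, in turn, the standard axioms of a complete Zariski geometry: Noetherianity on each $\CC^n$; closure of closed sets under coordinate permutations, products with $\CC$, diagonals, and pullbacks along coordinate projections; closure under finite intersections; completeness (closedness of projections of closed sets); and a good dimension theory satisfying the fibre-dimension inequality. All three groups of axioms should be read off from the combinatorial description of pre-(weakly)-special data in \Cref{nota:prespecial-combinatorial} together with the projection lemmas already established.

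For Noetherianity, each weakly special variety is Zariski-closed in $\mathbb{A}^n_\CC$, so our topology is coarser than the algebraic Zariski topology and inherits the descending chain condition. Closure under coordinate permutations and under pullback along coordinate projections corresponds to relabeling or extending the index set of the datum $(\pi_0,\Pi,\xi,\gamma)$; taking a product with $\CC$ corresponds to adjoining a singleton block to $\Pi$; singletons are obtained from $\pi_0 = \{1,\ldots,n\}$, $\Pi = \varnothing$ by choosing preimages under $j$ of each coordinate; and the diagonal $\{x_i = x_j\}$ is special because $\Phi_1(X,Y) = X-Y$. Completeness is precisely \Cref{cor:proj-special}.

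For the dimension theory, assign to each weakly special $X_{(\pi_0,\Pi,\xi,\gamma)}$ the dimension $|\Pi|$; by \Cref{prop:prespecial-parameterization} the map $\nu:\mathfrak{h}^\Pi \to \mathfrak{X}_{(\pi_0,\Pi,\xi,\gamma)}$ is a biholomorphism and $j^{\times n}$ has discrete fibres, so this matches the algebraic dimension. Irreducibility of each weakly special variety follows from the connectedness of $\mathfrak{h}^\Pi$ together with the surjection $j^{\times n}\circ \nu$. The fibre-dimension axiom is verified case by case using \Cref{prop:image-of-prespecial}: in each of the three cases the map $\mathfrak{X}_{(\pi_0,\Pi,\xi,\gamma)} \to \mathfrak{X}_{(\pi_0',\Pi',\xi',\gamma')}$ becomes, after applying the parametrisation $\nu$, a coordinate projection of products of upper half-planes, so every fibre of the induced projection $X \to Y$ (with $Y = X_{(\pi_0',\Pi',\xi',\gamma')}$) is either a single point or a copy of $\CC$, yielding the equality $\dim (X \cap \pi^{-1}(y)) = \dim X - \dim Y$ for every $y \in Y(\CC)$, which is stronger than the required inequality.

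The main obstacle is closure under finite intersection. Given two weakly special varieties $X$ and $X'$ in $\mathbb{A}^n_\CC$, I would pull back via $j^{\times n}$, writing the lifts as $\operatorname{SL}_2(\mathbb{Z})^n$-saturations of $\mathfrak{X}_{(\pi_0,\Pi,\xi,\gamma)}$ and $\mathfrak{X}_{(\pi_0',\Pi',\xi',\gamma')}$, and show that the intersection of two such orbit closures is a finite union of analytic sets of the same shape. The finiteness rests on the fact that the simultaneous constraints $\gamma_\pi(i,j)\cdot\tau_i = \tau_j$ and $\gamma'_{\pi'}(i,j)\cdot\tau_i = \tau_j$ combine into a single system whose consistent solutions, after refining the partitions to the coarsest common refinement, are parameterised by finitely many compatible $\operatorname{SL}_2(\mathbb{Z})$-coset representatives; each component then admits a pre-(weakly)-special presentation. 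Once this finiteness is established, the remaining axioms follow transparently from the combinatorial bookkeeping of $(\pi_0,\Pi,\xi,\gamma)$ and the lemmas recorded above.
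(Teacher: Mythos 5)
Your proposal is correct and takes essentially the same route as the paper: direct verification of the Zariski geometry axioms, with completeness supplied by \Cref{cor:proj-special} and the closure and dimension conditions read off from the pre-weakly-special parametrization of \Cref{nota:prespecial-combinatorial} and \Cref{prop:image-of-prespecial}. You are in fact more careful than the paper on the one delicate point, closure under finite intersections (which the paper dispatches with ``taking unions of the equations,'' glossing over the fact that weakly special varieties are \emph{components} of the varieties cut out by the modular equations); your only slips are cosmetic --- in the third case of \Cref{prop:image-of-prespecial} the fibres downstairs are finite sets of roots of a modular polynomial rather than single points, and the combined partition is a common \emph{coarsening} rather than a refinement --- and neither affects the dimension count.
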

\begin{proof}
We need to verify Axioms (Z0), (Z1), (Z2), and (Z3) of~\cite{HrZi-Zariski-JAMS}.

The implicit hypothesis of closure under intersection  and finite union is 
attained by taking unions of the equations (for closure under intersection) and then we allow for finite unions of weakly special varieties to get the closure under 
finite unions. 

The condition \((Z0)\) is achieved observing that the diagonal in \(\CC^n\) given by 
\(x_i = x_j\) is defined by \(\Phi_1(x_i,x_j) = 0\) and a condition 
\(x_i = c\) for a fixed \(c \in \CC\) is permitted because we are working with 
\emph{weakly} special varieties.  Condition (Z1) (and even the stonger condition 
of completeness) is given by \Cref{cor:proj-special}.  Conditions (Z2) and (Z3) 
are inherited from algebraic geometry in that \(\mathbb{A}^1_\CC\) is 
a smooth curve.
\end{proof}

\begin{cor}
\label{cor:QE-C-mod-constants}
The theory of \(\CC^\text{mod}_\CC\), the expansion of the structure 
\(\CC^\text{mod}\) by constants naming all elements, has quantifier elimination and 
is strongly minimal.
\end{cor}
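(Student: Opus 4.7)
The plan is to deduce both conclusions from the complete Zariski geometry structure of \Cref{prop:Zariski-special} via a standard Chevalley-style quantifier-elimination argument. First I would verify that every atomic $\mathcal{L}^\text{mod}_\CC$-formula defines a closed set of the Zariski geometry. Substituting constants into $R_X$ yields the projection onto the free coordinates of the intersection of the special variety $X$ with affine subspaces $x_i = c_i$; each such intersection is cut out by a mixed system of modular-polynomial equations and coordinate equations, so its components are weakly special by definition, and the projection to the remaining coordinates is weakly special by \Cref{cor:proj-special}. The formulas $x = y$ and $x = c$ likewise define weakly special sets (the diagonal $\Phi_1(x,y)=0$ and the singleton $\{c\}$). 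Hence the quantifier-free formulas of $\mathcal{L}^\text{mod}_\CC$ define exactly the constructible sets, meaning finite Boolean combinations of weakly special subvarieties.

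For quantifier elimination I would then induct on formula complexity. The only non-trivial step is eliminating an existential quantifier, which amounts to showing the class of constructible sets is closed under coordinate projection. By \Cref{prop:Zariski-special}, axiom (Z1) holds, so projections of closed sets are closed; a routine Chevalley-style argument (write a constructible set as a finite union of locally closed pieces, use Noetherianity of the topology of weakly special varieties, and propagate the projection through the decomposition) then shows that the image of a constructible set under projection is again constructible. This gives QE for the theory of $\CC^\text{mod}_\CC$.

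For strong minimality I would unpack \Cref{nota:prespecial-combinatorial} in the case $n = 1$: a pre-weakly-special datum on one coordinate is either $\pi_0 = \{1\}$ with $\xi(1)$ arbitrary in $\mathfrak{h}$, giving the singleton $\{j(\xi(1))\} = \{c\}$, or $\pi_0 = \emptyset$ and $\Pi = \{\{1\}\}$, giving all of $\mathbb{A}^1_\CC$. Hence the closed sets of the Zariski geometry on $\CC$ are precisely $\CC$ itself and the finite subsets of $\CC$. Combined with the QE just established, every $\mathcal{L}^\text{mod}_\CC$-definable subset of $\CC$ is a Boolean combination of finite sets and $\CC$, and so is finite or cofinite.

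The main potential obstacle is the bookkeeping in the first step: the relations of $\mathcal{L}^\text{mod}$ index only special (not weakly special) varieties, and one must verify that substituting constants and taking coordinate projections keeps us inside the class of finite unions of weakly special varieties. Once that is in hand, the Chevalley-style projection argument in the second step and the one-dimensional classification in the third step are both routine.
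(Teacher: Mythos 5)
Your argument is correct and is essentially the paper's: the proof given there consists of applying \cite[Proposition 2.1 and Corollary 2.7]{HrZi-Zariski-JAMS} to the complete Zariski geometry established in \Cref{prop:Zariski-special}, and your Chevalley-style elimination plus the $n=1$ classification of weakly special sets is precisely the content of that citation unpacked by hand. The one place your sketch is thinner than it looks is the passage from ``projections of closed sets are closed'' to ``projections of constructible sets are constructible,'' which requires the dimension axiom (Z3) (to control how a proper closed $Y \subsetneq X$ meets the fibers over a dense subset of $\rho(X)$) and not just (Z1)/completeness --- but that is exactly what the cited Hrushovski--Zilber lemmas supply.
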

\begin{proof}
Apply \cite[Proposition 2.1 and Corollary 2.7]{HrZi-Zariski-JAMS}.
\end{proof}

We need the result of \Cref{cor:QE-C-mod-constants} without naming constants.  
The proof of \cite[Proposition 2.1]{HrZi-Zariski-JAMS} goes through under a 
weakening (Z0)\({}'\).  Let us make this precise.

\begin{prop}
\label{prop:Zariski-geometry-without-constant}
Let \(D\) be a set and for each 
\(n \in \mathbb{N}\) let \(\mathcal{T}_n\) be a 
topology on \(D\).  Let \(S \subseteq D\) be an infinite
set.  We presume that with these topologies, 
\(D\) satisfies conditions (Z1), (Z2) and (Z3) of the 
definition of a Zariski geometry and the weakened condition
(Z0)\({}'\):  \begin{itemize}
\item for each pair \((m,n) \in \mathbb{N}^2\) of natural 
numbers and sequence \((i_1, \cdots, i_m) \in \{1, \ldots, n\}^m\) the
map \(D^n \to D^m\) given by \((x_1, \ldots, x_n) \mapsto (x_{i_1}, \ldots, x_{i_m}))\) 
is 
continuous, 
\item each polydiagonal defined by \(x_i = x_j\) in \(D^n\) is closed, 
\item for \(c \in S\) and \(n \in \mathbb{N}\)
each constant map \(D^n \to D\) given by \((x_1, \ldots, x_n) \mapsto c\)
 is continuous and \(\{c\} \in \mathcal{T}_1\), and
 \item for each finite \(Y \in \mathcal{T}_1\), \(Y \subseteq S\). 
\end{itemize}
Then the structure \(\mathfrak{D}\) with universe \(D\) and predicates for 
each closed set admits quantifier elimination and is strongly minimal.
\end{prop}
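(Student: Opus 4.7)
The plan is to trace where $(Z0)$ enters the Hrushovski-Zilber proof of Proposition~2.1 and Corollary~2.7 of \cite{HrZi-Zariski-JAMS} and check that $(Z0)'$ always suffices. That proof invokes $(Z0)$ for two purposes: to place the diagonals $\{x_i=x_j\}\subseteq D^n$ among the closed sets, and to ensure that every singleton $\{c\}$ is closed in $\mathcal{T}_1$ and each constant map $\bar y\mapsto c$ is continuous. The first is retained explicitly in $(Z0)'$; the second is preserved only for $c\in S$. The strategy is therefore to arrange every appeal to the second use to involve parameters in $S$, using $(Z3)$ and the last clause of $(Z0)'$ (finite closed sets lie in $S$) to force an eventual descent into $S$.

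Quantifier elimination should go through essentially verbatim: a quantifier-free $\phi(x,\bar y)$ defines a finite boolean combination of closed sets, which decomposes into locally closed pieces by the Noetherianity inherited from $(Z2)$; projection of closed is closed by $(Z1)$, and a standard Chevalley-style induction on dimension then shows that projection of constructible is constructible. The closed diagonals from $(Z0)'$ provide the equality predicate, and this step never requires closedness of individual singletons.

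For strong minimality, QE reduces the task to showing that each fiber $C_{\bar b}:=\{a\in D:(a,\bar b)\in C\}$, for $C\subseteq D^{1+n}$ closed and $\bar b\in D^n$, is finite or equals $D$. When $\bar b\in S^n$, the continuity clause of $(Z0)'$ makes $a\mapsto(a,\bar b)$ continuous, hence $C_{\bar b}$ is closed in $\mathcal{T}_1$, and $(Z3)$ concludes. For $\bar b\notin S^n$, I would argue by induction using the fiber-dimension analysis from $(Z1)$ and $(Z2)$: the projection $\pi(C)\subseteq D^n$ is closed by $(Z1)$, and the exceptional locus $E\subsetneq\pi(C)$ where fiber dimension jumps above generic is a proper closed subset of strictly smaller dimension, while generic fibers have uniformly bounded cardinality. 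Thus $C_{\bar b}$ is finite for $\bar b\notin E$; for $\bar b\in E$ we iterate with $E$ in place of $\pi(C)$. Since each iteration strictly drops the dimension of the stratum containing $\bar b$, after finitely many steps that stratum is zero-dimensional, hence a finite closed set, which the last clause of $(Z0)'$ forces into $S$, returning us to the already-handled case. Boolean combinations of sets each finite or equal to $D$ are finite or cofinite, completing strong minimality.

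The main obstacle is precisely this descent for parameters outside $S^n$: ensuring that at every stage the fiber behavior is controlled by purely topological-dimensional data ($(Z1)$, upper-semicontinuity of fiber dimension, uniform boundedness of generic finite fibers) and does not secretly require closedness of $\{c\}$ for $c\notin S$. These facts are standard consequences of the Zariski geometry axioms $(Z1)$--$(Z3)$, so the reduction should succeed once one is careful with the bookkeeping of dimension strata and verifies that the uniform cardinality bound on generic finite fibers can be extracted without any use of $(Z0)$ beyond $(Z0)'$.
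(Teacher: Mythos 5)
Your high-level plan---audit the Hrushovski--Zilber proof and reroute every appeal to closed singletons through \(S\)---is exactly the right one, but the execution omits the single substitution that makes the proposition true. Your assertion that the quantifier-elimination part of \cite[Proposition 2.1]{HrZi-Zariski-JAMS} ``never requires closedness of individual singletons'' is incorrect: the development leading to QE uses that part of (Z0) repeatedly. In the proof that \(D^k\) is irreducible (their Lemma 2.2), again in Lemma 2.4 (to see that \(\{a : C_a = D\}\) is closed), in the product-of-irreducibles step of the dimension theorem, in Lemma 2.6, and at the conclusion of Proposition 2.1, one needs sets of the form \(F^* = \{ a \in D^k : (a,x) \in F \text{ for all } x \in D\}\) to be closed, and the original argument writes \(F^* = \bigcap_{x \in D} F_x\), an intersection of closed sets precisely because each \(\{x\}\) is closed and each constant map is continuous. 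The repair---which is the entire content of the proposition---is the observation that \(\bigcap_{x \in D} F_x = \bigcap_{x \in S} F_x\): if \(a \notin \bigcap_{x\in D}F_x\) then the fiber \(\{x : a \in F_x\}\) is a proper, hence finite, closed subset of \(D\) by (Z3), so since \(S\) is infinite some \(x \in S\) already witnesses \(a \notin F_x\); and the intersection over \(S\) is closed using only (Z0)\({}'\). You never identify these uses or supply this substitution, so the core of the argument is missing. (The last clause of (Z0)\({}'\), that finite sets in \(\mathcal{T}_1\) lie in \(S\), is needed here too: in the dimension theorem one must know that every nonempty closed set contains a dense set of points with coordinates in \(S\).)

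Your descent argument for strong minimality at parameters \(\bar b \notin S^n\) is both unnecessary and broken. Unnecessary, because once QE is in hand the uniform one-dimensionality axiom bounds the fibers \(C(a)\) of a closed \(C \subseteq D^n \times D\) for \emph{every} \(a \in D^n\), with no restriction on where \(a\) lives; no case split on \(\bar b \in S^n\) is required. Broken, because the last clause of (Z0)\({}'\) concerns only \(\mathcal{T}_1\): a zero-dimensional closed stratum of parameters in \(D^n\) with \(n>1\) is not ``forced into \(S\),'' so your descent never terminates in the already-handled case. Moreover, the upper-semicontinuity of fiber dimension and the uniform bound on generic fibers that you invoke as ``standard consequences of (Z1)--(Z3)'' are themselves established in \cite{HrZi-Zariski-JAMS} by arguments that use (Z0), so appealing to them without re-auditing those proofs is circular in exactly the way the proposition is meant to avoid.
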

\begin{proof}
Let us work through the proof of \cite[Proposition 2.1]{HrZi-Zariski-JAMS}
noting where the apparent use of the extra constant symbols may be replaced 
by working just with \(S\).  

The proof of~\cite[Lemma 2.2]{HrZi-Zariski-JAMS} that for each \(k\) the 
space \(D^k\) is irreducible seems to make use of the condition that 
\(\{x\}\) is closed for each \(x \in D\) to show that if \(F \subseteq 
D^{k+1}\) is closed then \(F^* := \{ a \in D^k : (a,x) \in F \text{ for all } 
x \in D \}\) is also closed in that 
\(F^* = \bigcap_{x \in D} F_x\) where \(F_x = \{ a \in D^k : (a,x) \in F \}\) would be
an intersection of closed sets if we were to assume \((Z0)\).   
Let us note that \(\bigcap_{x \in D} F_x = 
\bigcap_{x \in S} F_x\).  The inclusion \(\subseteq\) is immediate from the 
inclusion \(S \subseteq D\).  For the other inclusion, if \(a \in D^k 
\smallsetminus \bigcap_{x \in D} F_x \), then by (Z3), there are only finitely 
many \(x \in D\) with \(a \in F_x\).  In particular, there is some \(x \in S\)
with \(a \notin F_x\).  That is, \(a \notin \bigcap_{x \in S} F_x \).

The proof of~\cite[Lemma 2.3]{HrZi-Zariski-JAMS} that the closure of the image
of an irreducible set under a projection remains irreducible and that each component of
the closure of an image of a closed set comes from a single component 
does not use \((Z0)\).   

The proof of~\cite[Lemma 2.4]{HrZi-Zariski-JAMS} 
uses \((Z0)\) in a way similar to what appears in \cite[Lemma 2.2]{HrZi-Zariski-JAMS} 
in that one needs to check that for a closed set \(C \subseteq D^n \times D\) that
the set \(\{ a \in D^n : C_a = D \}\) is closed, which there is achieved by realizing
this set as \(\bigcap_{x \in D} C_x\) but we could see it as \(\bigcap_{x \in S} C_x\).

The dimension theorem in \cite[Lemma 2.5]{HrZi-Zariski-JAMS} implicitly uses the
same argument as in \cite[Lemma 2.2]{HrZi-Zariski-JAMS} in showing that the 
product of two irreducible sets is irreducible.  For our version of this result we 
need the last condition in (Z0)\({}'\) to see that each set in \(\mathcal{T}_n\) contains
a dense set of points with coordinates from \(S\).  

The proof of \cite[Lemma 2.6]{HrZi-Zariski-JAMS} seems to use \((Z0)\) in that various
choices of points in \(D\) are made.  It is enough to note that these may be 
taken from \(S\).    Finally, when the proof of \cite[Proposition 2.1]{HrZi-Zariski-JAMS} is completed on page 8, \((Z0)\) is again used as it was for \cite[Lemma 2.2]{HrZi-Zariski-JAMS} and we may replace that use with an intersection over \(S\).
\end{proof}

It follows from \Cref{prop:Zariski-geometry-without-constant} that 
the theory of \(\CC^\text{mod}\) eliminates quantifiers.

\begin{cor}
\label{prop:QE-mod} 
The theory of \(\CC^\text{mod}\)
eliminates quantifiers.
\end{cor}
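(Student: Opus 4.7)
The plan is to apply \Cref{prop:Zariski-geometry-without-constant} with $D = \CC$, with the topologies $\mathcal{T}_n$ whose closed sets are the finite unions of special subvarieties of $\mathbb{A}^n_\CC$, and with distinguished infinite subset $S := \CM$. That $\CM$ is infinite is standard; more importantly, the closed singletons of $\CC^\text{mod}$ are exactly the CM points, which is what forces this choice of $S$.

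Conditions (Z1), (Z2), and (Z3) will be established just as in the proof of \Cref{prop:Zariski-special}: (Z1) follows from the special-varieties case of \Cref{cor:proj-special}, while (Z2) and (Z3) are inherited from the algebraic geometry of the smooth curve $\mathbb{A}^1_\CC$. I expect the substance of the work to lie in the weakened axiom (Z0)$'$. Continuity of coordinate projections will be immediate, since the pullback of a modular equation $\Phi_\ell(x_i,x_j) = 0$ along a coordinate projection is again a modular equation in the larger tuple. Each polydiagonal $\{x_i = x_j\}$ is cut out by $\Phi_1(x_i,x_j) = 0$, as $\Phi_1(X,Y) = X - Y$ up to sign. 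For $c \in \CM$, the constant map $\CC^n \to \CC$ with value $c$ is continuous because its preimages of closed sets are either $\CC^n$ or $\emptyset$, and $\{c\}$ is a component of $\{\Phi_\ell(x,x) = 0\}$ for any $\ell > 1$ equal to the degree of a primitive (hence cyclic) non-scalar endomorphism of an elliptic curve with $j$-invariant $c$. Finally, every finite closed subset of $\CC$ lies in $\CM$, because the only nontrivial one-variable modular equations in the language are of the form $\Phi_\ell(x,x) = 0$ with $\ell > 1$, whose zero set is a finite collection of CM points (the equation $\Phi_1(x,x) = 0$ is trivial and defines all of $\CC$).

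The main conceptual point, and the reason the weakened axiom (Z0)$'$ must be used in place of the full (Z0) of \cite{HrZi-Zariski-JAMS}, is that $\CC^\text{mod}$ does not name arbitrary complex numbers as constants: the only closed singletons are at CM points. The diagonal modular polynomials $\Phi_\ell(x,x)$ isolating the CM points are exactly what make $S = \CM$ a legitimate choice in \Cref{prop:Zariski-geometry-without-constant}, which then yields quantifier elimination for $\CC^\text{mod}$.
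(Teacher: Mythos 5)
Your proposal is correct and is exactly the intended argument: the paper's proof of this corollary is simply the remark that it follows from Proposition~\ref{prop:Zariski-geometry-without-constant}, which was formulated precisely to be applied with $D=\CC$, the topologies of finite unions of special varieties, and $S=\CM$ (the closed singletons being the CM points cut out by $\Phi_\ell(x,x)=0$ for $\ell>1$, and every finite closed subset of $\CC$ lying in $\CM$). Your verification of (Z0)$'$ and the reuse of Corollary~\ref{cor:proj-special} for (Z1)--(Z3) matches what the paper leaves implicit.
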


The proof of quantifier elimination for \(\CC^\text{mod}\) is effective.

\begin{prop}
\label{prop:effective-QE-C-mod}
There is a computable function 
from \(\mathcal{L}^\text{mod}\) to itself, \(\phi \mapsto \widehat{\phi}\), so that 
for every \(\phi\) we have 
that \(\widehat{\phi}\) is 
quantifier-free and 
\(\CC^\text{mod} \models 
\phi \leftrightarrow \widehat{\phi}\).
\end{prop}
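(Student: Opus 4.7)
The plan is to render the quantifier elimination procedure underlying \Cref{prop:Zariski-geometry-without-constant} algorithmic by representing every (weakly) special variety through its pre-(weakly-)special datum $(\pi_0,\Pi,\xi,\gamma)$ from \Cref{nota:prespecial-combinatorial} and observing that all of the basic geometric operations needed in the quantifier-free reduction are computable on this representation. I would proceed by induction on the complexity of $\phi$: Boolean connectives are handled by pushing them into an equivalent quantifier-free form of the subformulas, so the only substantive case is the elimination of a single existential quantifier from a conjunction of literals.

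First I would verify effectivity of three primitive operations on weakly special varieties presented by pre-weakly-special data: \emph{intersection} (merge the defining equations, splitting into finitely many cases where the constraints $\gamma(i,j)\cdot \tau_i = \tau_j$ and $\gamma'(i,j)\cdot\tau_i = \tau_j$ force a coincidence of $\operatorname{SL}_2(\mathbb{Z})$-cosets), \emph{decomposition into irreducible components} (enumerate the finitely many pre-weakly-special data compatible with the merged system, exactly as in the proof of \Cref{prop:special-realized-as-image-of-combinatorial}), and \emph{projection} (the explicit combinatorial recipe of \Cref{prop:image-of-prespecial}, which is manifestly algorithmic). Each returns a finite list of pre-weakly-special data computable from the input.

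With these primitives in place, I would eliminate $\exists x$ from a formula of the shape $\psi(x,\bar y) = \bigwedge_i R_{X_i}(x,\bar y) \wedge \bigwedge_j \neg R_{Y_j}(x,\bar y)$, which is the form one arrives at after converting the inductively-obtained quantifier-free body to disjunctive normal form and distributing $\exists x$ across $\vee$. Compute the intersection $W = \bigcap_i X_i$ as a union $\bigcup_k W_k$ of weakly special components, likewise compute $Z_{j,k} := W_k \cap Y_j$, and compute the projections $\rho(W_k)$ and $\rho(Z_{j,k})$ onto the $\bar y$-coordinates by \Cref{cor:proj-special}. The structure $\CC^\text{mod}$ is strongly minimal (\Cref{prop:Zariski-geometry-without-constant}), so for each $\bar y \in \rho(W_k)$ the $x$-fiber $(W_k)_{\bar y}$ is either finite or all of $\mathbb{A}^1_{\CC}$; the ``bad'' locus $B_{j,k} \subseteq \rho(W_k)$ on which $(Z_{j,k})_{\bar y} = (W_k)_{\bar y}$ is itself a weakly special subvariety, read off the pre-weakly-special data of $W_k$ and $Z_{j,k}$ by checking whether the $x$-coordinate in $W_k$ is determined by $\bar y$ through the cocycle $\gamma$. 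Then $\exists x\,\psi$ is equivalent to $\bigvee_k \bigl(R_{\rho(W_k)}(\bar y) \wedge \bigwedge_j \neg R_{B_{j,k}}(\bar y)\bigr)$, a quantifier-free $\mathcal{L}^\text{mod}$-formula effectively produced from $\psi$.

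The main obstacle is the correct and uniform identification of the loci $B_{j,k}$: the computation requires a case split on whether the index of $x$ lies in $\pi_0$ (so $x$ is fixed by the datum and $B_{j,k} = \rho(Z_{j,k})$), in a singleton block of $\Pi$ (so $x$ varies freely and $B_{j,k} = \emptyset$ unless $W_k = Z_{j,k}$, which is decidable by comparing pre-weakly-special data), or in a larger block (so $x$ is determined by another coordinate via the cocycle, reducing essentially to the previous cases up to a finite ramification of order $\ell(\gamma)$). Each branch produces a computable pre-weakly-special datum for $B_{j,k}$, whence the induction closes and we obtain the desired total computable function $\phi \mapsto \widehat{\phi}$.
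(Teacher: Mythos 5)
Your strategy is essentially the paper's: reduce to eliminating one existential quantifier from a disjunct of the form ``special variety minus proper special subvarieties,'' compute projections through pre-(weakly-)special data via \Cref{prop:image-of-prespecial} and \Cref{cor:proj-special}, and assemble a quantifier-free formula from the resulting predicates. Making explicit that intersection and decomposition into components are computable on pre-weakly-special data is a worthwhile addition, since the paper's appeal to ``the usual operations in Boolean algebras'' silently requires exactly that.

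The final assembly step, however, has a genuine gap. You claim \(\exists x\,\psi\) is equivalent to \(\bigvee_k\bigl(R_{\rho(W_k)}\wedge\bigwedge_j\neg R_{B_{j,k}}\bigr)\), where \(B_{j,k}\) is the locus on which the fiber of a \emph{single} \(Z_{j,k}\) equals the fiber of \(W_k\). This fails in exactly the case you flag as delicate: when \(x\) lies in a block of size at least two, the fiber \((W_k)_{\bar y}\) is finite of cardinality greater than one, and a union of proper subsets of a finite fiber can exhaust it without any single one doing so. Concretely, let \(W = V(\Phi_2(x_1,x_2)) \subseteq \mathbb{A}^2_\CC\), choose \(c\) over which \(\Phi_2(c,\cdot)\) has three distinct roots \(d_1,d_2,d_3\), and set \(Y_j := \{(c,d_j)\}\), which is weakly special as a component of \(x_1 = c \wedge x_2 = d_j\). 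For \(\psi = R_W \wedge \bigwedge_{j=1}^3 \neg R_{Y_j}\), the set defined by \(\exists x_2\,\psi\) is \(\mathbb{A}^1 \smallsetminus \{c\}\), but your recipe returns \(\mathbb{A}^1\), since no single \((Y_j)_c\) equals \((W)_c\). A second, smaller slip occurs in your singleton-block case: \(B_{j,k}\) need not be empty when \(W_k \neq Z_{j,k}\), e.g.\ when \(Z_{j,k}\) is cut out by \(x_1 = c\) alone and so has full fiber over one point. The correct bad locus is \(\{\bar y : (W_k)_{\bar y} \subseteq \bigcup_j (Z_{j,k})_{\bar y}\}\), and showing that this is a computable finite union of weakly special sets requires the uniform-finiteness and dimension bookkeeping of the Hrushovski--Zilber quantifier elimination, which is precisely what the paper invokes through \Cref{prop:QE-mod} rather than re-deriving. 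To repair your version you would need to track fiber cardinalities, for instance by introducing the (again weakly special, by (Z3)-type uniform bounds) loci where at most \(m\) points of the fiber of \(W_k\) survive removal of the \(Z_{j,k}\); the pairwise comparison of data you propose does not suffice.
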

\begin{proof}
Working by recursion on the complexity of \(\phi\) we see that it 
suffices to consider that case that \(\phi = \exists x_{n+1} \psi\) where
\(\psi\) is quantifier-free in the free variables 
\(x_1, \ldots, x_{n+1}\).  With the usual operations in Boolean 
algebras, we may computably convert \(\psi\) to \(\check{\psi}\) expressed 
as \[\check{\psi} = \bigvee_{X \in \mathcal{X}} \left( R_X \land \neg (\bigvee_{Y \in \mathcal{Y}_X} R_Y) \right) \] where \(\mathcal{X}\) is a finite set of special varieties and 
for each \(X \in \mathcal{X}\), \(\mathcal{Y}_X\) is a finite set of proper 
special subvarieties of \(X\).  \Cref{cor:proj-special} gives a way to compute 
the projection of each such \(X\) and \(Y\).  The argument for 
\Cref{prop:QE-mod} constructs \(\widehat{\phi}\) from the predicates for each 
of these projections.
\end{proof}

In fact, the equivalence given by \Cref{prop:effective-QE-C-mod} 
also works for \(\CM^\text{mod}\).

\begin{prop}
\label{prop:effective-QE-CM-mod}
With the function of \Cref{prop:effective-QE-C-mod},
for every formula \(\phi\) of 
\(\mathcal{L}^\text{mod}\) we
have
\(\CM^\text{mod} \models \phi \leftrightarrow \widehat{\phi}\)
\end{prop}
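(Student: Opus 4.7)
The plan is to prove the equivalence by induction on the complexity of $\phi$, mirroring the recursive definition of $\phi \mapsto \widehat{\phi}$ from \Cref{prop:effective-QE-C-mod}. The key observation I want to exploit is that $\widehat{\phi}$ is quantifier-free, so its truth value on any tuple $\bar{a} \in \CM^n$ is the same whether computed in $\CM^{\text{mod}}$ or in $\CC^{\text{mod}}$, because the interpretation of each predicate $R_X$ in $\CM^{\text{mod}}$ is just $X(\CC) \cap \CM^n$. The atomic and Boolean steps then go through formally, and I am left with the essential case $\phi = \exists x_{n+1}\, \psi$ with $\psi$ quantifier-free, where I need to show that the existence of a witness in $\CC$ already implies the existence of a witness in $\CM$.

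Using the disjunctive normal form $\check{\psi} = \bigvee_X (R_X \land \neg \bigvee_{Y \in \mathcal{Y}_X} R_Y)$ from the proof of \Cref{prop:effective-QE-C-mod}, I will reduce the claim to the following lifting statement: for every special $X \subseteq \mathbb{A}^{n+1}_\CC$, every finite collection of proper special subvarieties $Y_1, \dots, Y_k \subsetneq X$, and every $\bar{a} \in \CM^n$, if some $b \in \CC$ satisfies $(\bar{a}, b) \in X \setminus \bigcup_i Y_i$, then some $b \in \CM$ does as well. I then analyze this by running through the pre-special datum $(\pi_0, \Pi, \xi, \gamma)$ attached to $X$ via \Cref{prop:special-realized-as-image-of-combinatorial} and examining the role of the index $n+1$.

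Three cases arise. If $n+1 \in \pi_0$, the fiber of $X$ over $\bar{a}$ is the single CM-point $j(\xi(n+1))$. If $n+1$ lies in a block $\pi \in \Pi$ of size at least two, I fix any $i \in \pi \setminus \{n+1\}$; the fiber is then cut out by $\Phi_{\ell(\gamma_\pi(i, n+1))}(a_i, x) = 0$, and since $a_i \in \CM$ and isogeny preserves the CM property, every root of this polynomial lies in $\CM$. In both of these cases the fiber is already contained in $\CM$, so the given $\CC$-witness is automatically a $\CM$-witness and there is nothing more to do. The remaining case, $\{n+1\} \in \Pi$, is where the fiber of $X$ is all of $\CC$ and therefore genuinely includes non-CM points.

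The hard part will be handling this last case cleanly. The plan is to apply the same trichotomy to each $Y_i$: its fiber over $\bar{a}$ is either empty, a finite set of CM-points, or all of $\CC$. The third option cannot occur for any $Y_i$ that matters, since it would force $(\bar{a}, b') \in Y_i$ for every $b' \in \CC$, contradicting the assumed existence of a witness in $X \setminus \bigcup_i Y_i$. Hence the forbidden subset of the fiber is a finite set of CM-points, and since $\CM$ is infinite I can choose $b \in \CM$ outside of it. This finishes the existential step and closes the induction.
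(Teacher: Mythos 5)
Your proposal is correct and follows essentially the same route as the paper: one direction is immediate because $\widehat{\phi}$ is quantifier-free, and the other reduces to showing that a $\CC$-witness for $R_X \land \neg\bigvee R_Y$ over a tuple from $\CM^n$ can be replaced by a $\CM$-witness, split according to whether the fiber $X_{\bar a}$ is a finite set of CM-points (forced by an equation $\Phi_\ell(x_i,x_{n+1})=0$ or $x_{n+1}=c$) or all of $\CC$, in which case the finitely many excluded points are avoided using the infinitude of $\CM$. Your phrasing of the case split via the pre-special datum rather than directly via the defining equations is only a cosmetic difference.
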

\begin{proof}
Working by induction on \(\phi\), it suffices to consider the 
case that \(\phi = \exists x_{n+1} \psi\) where \(\psi\) is quantifier-free in the free variables 
\(x_1, \ldots, x_{n+1}\).  As in the proof of \Cref{prop:effective-QE-CM-mod},
we may put \(\psi\) into disjunctive normal form and then moving \(\bigvee\) 
across the existential quantifier, we may assume that 
\(\psi = R_X \land \neg (\bigvee_{Y \in \mathcal{Y}} R_Y)\) where 
\(X \subseteq \mathbb{A}^{n+1}_\CC\) is a special subvariety and 
\(\mathcal{Y}\) is a finite set of proper special subvarieties of \(X\).
If \(a \in \CM^{n}\) and \(\CM^\text{mod} \models \exists x_{n+1} \psi(a,x_{n+1})\),
then \(\CC^\text{mod} \models \exists x_{n+1} \psi(a,x_{n+1})\), which 
implies by \Cref{prop:effective-QE-C-mod} that 
\(\CC^\text{mod} \models \widehat{\phi}(a)\).  Since 
\(\widehat{\phi}\) is quantifier-free, it follows that 
\(\CM^\text{mod} \models \widehat{\phi}(a)\).    In the other direction, 
if \(\CM^\text{mod} \models \widehat{\phi}(a)\), then again because 
\(\widehat{\phi}\) is quantifier-free, 
\(\CC^\text{mod} \models \widehat{\phi}(a)\).  This yields that 
\(\CC^\text{mod} \models \exists x_{n+1} \psi(a,x_{n+1})\).  That is, 
\(X_a(\CC) \smallsetminus (\bigcup_{Y \in \mathcal{Y}} Y_a(\CC)) \neq \emptyset\).
If \(\dim X_a = 0\), then either some equation of the form 
\(\Phi_\ell(x_i,x_{n+1}) = 0\) with \(1 \leq i \leq n\) and \(\ell \in \mathbb{Z}_+\)
or \(x_{n+1} = c\) for some \(c \in \CM\) holds on \(X\).  Thus, either every 
point in \(X_a\) satisfies \(\Phi_\ell(a_i,x) = 0\) or \(x = c\).  Either way,
\(X_a \subseteq \CM\), where in the first case we use the fact that an elliptic 
curve isogenous to a CM-elliptic curve also has CM.  Thus, in this case 
that \(\dim X_a = 0\) and \(\CM^\text{mod} \models \widehat{\phi}(a)\), we have that 
\(\CM^\text{mod} \models \exists x_{n+1} \psi (a,x_{n+1})\).  The other 
possibility is that \(\dim X_a = 1\).  Nonemptyness of \(X_a(\CC) \smallsetminus (\bigcup_{Y \in \mathcal{Y}} Y_a(\CC))\) implies that each \(Y_a\) is finite.  
Thus, any \(c \in \CM\) outside of the finite set 
\(\bigcup_{Y \in \mathcal{Y}} Y_a(\CC))\) witnesses that \(CM^\text{mod} \models
\exists x_{n+1} \phi(a,x_{n+1})\).  
 \end{proof}

It follows from \Cref{prop:effective-QE-CM-mod} that 
\(\CM^\text{mod}\) is an elementary substructure of \(\CC^\text{mod}\).

\begin{cor}
\label{prop:elementary-extension-CM-mod-C-mod}
\(\CM^\text{mod} \preceq \CC^\text{mod}\)
\end{cor}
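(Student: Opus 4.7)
The plan is to chain the two effective quantifier-elimination statements already proved. Since $\CM^\text{mod}$ is by construction the substructure of $\CC^\text{mod}$ with universe $\CM$, and $\mathcal{L}^\text{mod}$ is purely relational (so there is no closure condition to verify), it is automatic that for any quantifier-free $\mathcal{L}^\text{mod}$-formula $\psi(x_1,\ldots,x_n)$ and any tuple $a \in \CM^n$ one has $\CM^\text{mod} \models \psi(a)$ if and only if $\CC^\text{mod} \models \psi(a)$. This is the one piece of substructure bookkeeping needed.

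Given this, I would fix an arbitrary $\mathcal{L}^\text{mod}$-formula $\phi(x_1,\ldots,x_n)$ and tuple $a \in \CM^n$ and read off the chain
\[
\CM^\text{mod} \models \phi(a) \ \Leftrightarrow\ \CM^\text{mod} \models \widehat\phi(a) \ \Leftrightarrow\ \CC^\text{mod} \models \widehat\phi(a) \ \Leftrightarrow\ \CC^\text{mod} \models \phi(a),
\]
where the first equivalence is \Cref{prop:effective-QE-CM-mod}, the last is \Cref{prop:effective-QE-C-mod}, and the middle equivalence is the substructure observation applied to the quantifier-free formula $\widehat\phi$. Since $\phi$ and $a$ were arbitrary, this is exactly the definition of $\CM^\text{mod} \preceq \CC^\text{mod}$. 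There is no genuine obstacle left: the entire arithmetic-geometric content (namely that $\phi$ and the common quantifier-free $\widehat\phi$ define the same set in each structure) has already been discharged by the two preceding propositions, and the present corollary is a purely formal consequence.
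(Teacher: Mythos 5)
Your argument is correct and is essentially the paper's own proof: the paper runs the identical three-step chain (\Cref{prop:effective-QE-CM-mod}, transfer of the quantifier-free $\widehat\phi$ across the relational substructure, \Cref{prop:effective-QE-C-mod}), merely packaged as an application of the Tarski--Vaught test to formulas $\exists x_{n+1}\phi$ rather than as a direct verification of the definition of $\preceq$ for arbitrary $\phi$. The two formulations are interchangeable here, so there is nothing substantive to add.
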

\begin{proof}
Apply the Tarski-Vaught test:  for any \(\mathcal{L}^\text{mod}\)-formula 
\(\phi\) in the free variables \(x_1, \ldots, x_{n+1}\) and tuple \(a \in \CM^n\) 
we have by \Cref{prop:effective-QE-CM-mod} and then \Cref{prop:effective-QE-C-mod} that
\begin{align*}
\CM^\text{mod} \models \exists x_{n+1} \phi(a,x_{n+1}) &\Longleftrightarrow 
\CM^\text{mod} \models \widehat{\phi}(a) \\
 &\Longleftrightarrow 
\CC^\text{mod} \models \widehat{\phi}(a) \\ 
&\Longleftrightarrow 
  \CC^\text{mod} \models \exists x_{n+1} \phi(a,x_{n+1}) 
\end{align*}
\end{proof}

\begin{cor}
\label{cor:CM-mod-stable}
The theory of \(\CM^\text{mod}\)
is strongly minimal, and, 
\emph{a fortiori}, stable.
\end{cor}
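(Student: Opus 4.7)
The plan is to deduce the corollary immediately by transferring strong minimality from $\CC^\text{mod}$ to $\CM^\text{mod}$ along the elementary extension established in Corollary~\ref{prop:elementary-extension-CM-mod-C-mod}. Since $\CM^\text{mod} \preceq \CC^\text{mod}$, the two structures share a common complete theory, so it suffices to observe that $\CC^\text{mod}$ is strongly minimal.

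First I would confirm strong minimality of $\CC^\text{mod}$ itself. Although this is not recorded as a standalone corollary in the excerpt, it follows by combining Proposition~\ref{prop:Zariski-special}, which exhibits $\CC$ equipped with weakly special varieties as a complete Zariski geometry, with Proposition~\ref{prop:Zariski-geometry-without-constant}, which salvages the strong minimality half of the Hrushovski--Zilber argument (as invoked in Corollary~\ref{cor:QE-C-mod-constants}) without naming constants. The one-variable case is especially concrete: a weakly special subvariety of $\mathbb{A}^1_\CC$ is either all of $\mathbb{A}^1_\CC$ or a finite set --- a union of singletons cut out by $x = c$ and of CM-sets cut out by $\Phi_\ell(x,x) = 0$. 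Together with the quantifier elimination of Corollary~\ref{prop:QE-mod}, this forces every parametrically definable subset of $\CC$ in $\CC^\text{mod}$ to be finite or cofinite.

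Now by Corollary~\ref{prop:elementary-extension-CM-mod-C-mod}, any subset of $\CM$ definable in $\CM^\text{mod}$ with parameters from $\CM$ agrees with the same-formula, same-parameters $\CC^\text{mod}$-definable subset of $\CC$, intersected with $\CM$. The latter is finite or cofinite in $\CC$, and since $\CM$ is infinite, its intersection with $\CM$ is finite or cofinite in $\CM$. Hence $\CM^\text{mod}$ is strongly minimal, and stability follows \emph{a fortiori} since every strongly minimal theory is $\omega$-stable.

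There is no substantive obstacle at this final stage: all the work has been carried out in setting up the Zariski geometry of weakly special varieties and in extracting effective quantifier elimination for both $\CC^\text{mod}$ and its substructure $\CM^\text{mod}$. The only mild care needed is verifying that the weakened axiom $(Z0)'$ of Proposition~\ref{prop:Zariski-geometry-without-constant} indeed applies to $\CC^\text{mod}$, with $S$ taken to be the (infinite) set of named points arising from $0$-dimensional special varieties, and this is immediate from the definition of $\mathcal{L}^\text{mod}$.
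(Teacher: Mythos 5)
Your proof is correct and follows the same route as the paper: transfer strong minimality from \(\CC^\text{mod}\) to \(\CM^\text{mod}\) via the elementary inclusion \(\CM^\text{mod} \preceq \CC^\text{mod}\) of Corollary~\ref{prop:elementary-extension-CM-mod-C-mod}, since the two structures then share a complete theory. The only difference is that the paper obtains strong minimality of \(\CC^\text{mod}\) in one line --- it is a reduct of the strongly minimal field \((\CC,+,\cdot)\) --- rather than by re-running the Zariski-geometry and quantifier-elimination analysis, which is valid but unnecessary for this step.
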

\begin{proof}
The structure \(\CC^\text{mod}\) is a reduct of the strongly minimal 
structure of \(\CC\) in the language of rings.  Hence, 
\(\operatorname{Th}(\CC^\text{mod})\) is strongly minimal.  As 
\(\CM^\text{mod} \preceq \CC^\text{mod}\), we have that 
\(\operatorname{Th}(\CM^\text{mod}) = \operatorname{Th}(\CC^\text{mod})\).
Hence, \(\operatorname{Th}(\CM^\text{mod})\) is also strongly minimal.
\end{proof}

With the next result we see that \(\CM^\text{mod}\) is the structure
induced on \(\CM\) from the language of rings.

\begin{prop}
\label{prop:bi-definable}
The structures \(\CM^\text{mod}\) and \(\CM^\text{ind}\), where here we induce the structure 
from \(\CC\) in the language of rings, are bi-definable. 
\end{prop}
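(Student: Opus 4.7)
The plan is to show that the two structures, which share the universe $\CM$, have precisely the same collection of parameter-definable subsets of each Cartesian power $\CM^n$. One direction is essentially formal, while the other requires a nontrivial piece of arithmetic geometry.

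For the direction $\CM^\text{mod} \Rightarrow \CM^\text{ind}$: each atomic predicate $R_X^{\CM^\text{mod}} = X(\CC) \cap \CM^n$ is the trace on $\CM^n$ of a special variety $X$. By \Cref{prop:special-realized-as-image-of-combinatorial}, $X$ arises from a pre-special datum $(\pi_0,\Pi,\xi,\gamma)$, and its field of definition is generated over $\mathbb{Q}$ by the CM-points $j(\xi(i))$ for $i \in \pi_0$ (the ``fixed'' coordinates of $X$), all of which lie in $\CM$. Taking these CM-points as parameters, $X$ can be cut out inside $\CC^n$ by polynomial equations with coefficients in $\mathbb{Q}(\CM)$ (the cocycle data in $\gamma$ is absorbed into the integer-coefficient modular polynomials $\Phi_\ell$), so $X(\CC) \cap \CM^n$ is $\CM^\text{ind}$-definable with parameters from $\CM$.

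For the reverse direction $\CM^\text{ind} \Rightarrow \CM^\text{mod}$: quantifier elimination for algebraically closed fields reduces the question to showing that the trace on $\CM^n$ of every algebraic subvariety $V \subseteq \CC^n$ is $\CM^\text{mod}$-definable. This is where I would invoke Pila's proof \cite{Pila-AO} of the Andr\'{e}--Oort conjecture for products of modular curves: the Zariski closure in $\CC^n$ of $V(\CC) \cap \CM^n$ is a finite union $S_1 \cup \cdots \cup S_k$ of special subvarieties of $V$. Each $S_i$ contains a Zariski-dense set of CM-points, which follows immediately from \Cref{prop:prespecial-parameterization} by choosing each $\tau_\pi$ quadratic imaginary, and since $S_i \subseteq V$ one obtains the set-theoretic equality $V(\CC) \cap \CM^n = \bigcup_i (S_i(\CC) \cap \CM^n)$. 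The right-hand side is a finite union of atomic $\CM^\text{mod}$-predicates, and Boolean closure then takes care of the general constructible case.

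The main (essentially only) substantive obstacle is the appeal to Pila's theorem; the remainder is formal manipulation with quantifier elimination and Boolean combinations. With this bi-definability in place, the strong minimality of $\CM^\text{mod}$ established in \Cref{cor:CM-mod-stable} transfers directly to $\CM^\text{ind}$, completing the chain that yields \Cref{prop:stable-cm}.
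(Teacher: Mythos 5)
Your proposal is correct and follows essentially the same route as the paper: the forward direction is the formal observation that special varieties are algebraic (so $\CM^\text{mod}$ is a reduct of $\CM^\text{ind}$, with parameters from $\CM$ implicitly allowed), and the substantive direction uses quantifier elimination for $\operatorname{ACF}$ plus Pila's Andr\'{e}--Oort theorem to replace each variety $V$ by the special components of $\overline{V(\CC)\cap\CM^n}^{\text{Zariski}}$ without changing the trace on $\CM^n$, then closes under Boolean combinations. The density-of-CM-points remark is not actually needed for the set-theoretic identity $V(\CC)\cap\CM^n=\bigcup_i\bigl(S_i(\CC)\cap\CM^n\bigr)$, which already follows from $S_i\subseteq V$ together with $V(\CC)\cap\CM^n\subseteq\bigcup_i S_i(\CC)$.
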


\begin{proof}
This is the content of the Andr\'{e}-Oort conjecture for 
products of modular curves~\cite[Theorem 1.1*]{Pila-AO}.  

Indeed, by quantifier elimination for algebraically 
closed fields, each formula \(\phi\)
in the language of rings with free variables 
amongst \(\{x_1, \ldots, x_n\}\) we may write 
\[\phi(\CC) = \bigcup_{X \in \mathcal{X}} \left(X(\CC)  \smallsetminus 
\bigcup_{Y \in \mathcal{Y}_X} Y(\CC)) \right) \] 
where \(\mathcal{X}\) is a finite set of irreducible varieties 
and for each \(X \in \mathcal{X}\) the set \(\mathcal{Y}_X\) is a 
finite set of proper closed irreducible subvarieties of \(X\).

For \(X \in \mathcal{X}\) and \(Y \in \mathcal{Y}_X\) define 
\(\widetilde{X} := \overline{X(\CC) \cap \CM^n}^\text{Zariski}\) and \(\widetilde{Y} := \overline{Y(\CC) \cap \CM^n}^\text{Zariski}\).

Decompose these 
as \(\widetilde{X} = \bigcup_{Z \in \mathcal{Z}_X} Z\) and then
for each \(Z \in \mathcal{Z}_X\) write 
\(Z \cap \bigcup_{Y \in \mathcal{Y}_X} \widetilde{Y} \) as
\(\bigcup_{W \in \mathcal{W}_Z} W\) where each \(W\) is irreducible.  
By the Andr\'{e}-Oort 
conjecture, each \(Z\) and \(W\) is a special variety.  That is, 
\(R_{Z}\) and \(R_{W}\)
are predicates in \(\mathcal{L}^\text{mod}\).
Using the fact that \(\widetilde{X} \subseteq X\) and 
\(\widetilde{Y} \subseteq Y\), we see that 
\(\widetilde{X}(\CC) \cap \CM^n = X(\CC) \cap \CM^n\), 
and likewise for \(\widetilde{Y}\) and \(Y\) in place
of \(\widetilde{X}\) and \(X\).
Putting this all together we have 

\begin{align*}     
R_\phi(\fCM) &= \phi(\CC) \cap \CM^n \\
            &=  \bigcup_{X \in \mathcal{X}} \left(X(\CC)  \smallsetminus 
\bigcup_{Y \in \mathcal{Y}_X} Y(\CC) \right)\cap \CM^n \\
            &=  \bigcup_{X \in \mathcal{X}} \left(\widetilde{X}(\CC)  \smallsetminus 
\bigcup_{Y \in \mathcal{Y}_X} \widetilde{Y}(\CC) \right) \cap \CM^n \\
            &=  \bigcup_{X \in \mathcal{X}} \bigcup_{Z \in \mathcal{Z}_X} \left(Z(\CC)  \smallsetminus 
\bigcup_{W \in \mathcal{W}_Z} W(\CC) \right) \cap \CM^n \\
        &=  \bigcup_{X \in \mathcal{X}} \bigcup_{Z \in \mathcal{Z}_X} \left(Z(\CC) \cap 
        \CM^n \smallsetminus 
\bigcup_{W \in \mathcal{W}_Z} (W(\CC) \cap \CM^n )\right)  \\
        &= \hat{\phi}(\fCM)
\end{align*}

where  
\begin{align*}
\hat{\phi} &= \bigvee_{X \in \mathcal{X}} \bigvee_{Z \in \mathcal{Z}_X} \left(R_Z 
         \land \neg 
\bigwedge_{W \in \mathcal{W}_Z} R_W \right) 
\end{align*}
is a quantifier-free formula in \(\mathcal{L}^\text{mod}\). 

Thus, every basic predicate in \(\mathcal{L}^\text{ind}\) is 
interpreted in \(\fCM\) by a quantifier-free 
\(\mathcal{L}^\text{mod}\) formula.  Hence, 
every \(\mathcal{L}^\text{ind}\)-definable set in 
\(\CM^\text{ind}\) is already \(\mathcal{L}^\text{mod}\)-definable.  Since \(\CM^\text{mod}\) is a reduct 
of \(\CM^\text{ind}\), we see that 
\(\CM^\text{mod}\) and \(\CM^\text{ind}\) are 
bidefinable.
\end{proof}

Stability of \(\CM^\text{ind}\) follows from \Cref{cor:CM-mod-stable} and
\Cref{prop:bi-definable}.

\begin{cor}
\label{cor:stability-CM-ind}
The theory of \(\CM^\text{ind}\) is stable.
\end{cor}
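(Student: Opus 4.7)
The plan is to derive \Cref{cor:stability-CM-ind} directly from the two preceding results. \Cref{cor:CM-mod-stable} tells us that $\operatorname{Th}(\CM^\text{mod})$ is strongly minimal and in particular stable, while \Cref{prop:bi-definable} asserts that $\CM^\text{mod}$ and $\CM^\text{ind}$ are bi-definable. Because stability is determined entirely by the collection of $\emptyset$-definable sets---a theory is stable if and only if no definable relation has the order property---bi-definable structures share the same stability status. Hence I would transport stability from $\CM^\text{mod}$ to $\CM^\text{ind}$ across this bi-definability.

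To spell this out in a proof, I would argue as follows. Given any $\mathcal{L}^\text{ind}$-formula $\phi(\bar{x};\bar{y})$, \Cref{prop:bi-definable} supplies an $\mathcal{L}^\text{mod}$-formula $\psi(\bar{x};\bar{y})$ defining exactly the same relation on $\CM$. By \Cref{cor:CM-mod-stable}, $\psi$ does not have the order property in $\CM^\text{mod}$, and since the order property is a condition on which tuples satisfy the formula rather than on its syntactic shape, $\phi$ cannot have the order property in $\CM^\text{ind}$ either. Thus every $\mathcal{L}^\text{ind}$-formula is stable in $\CM^\text{ind}$, so $\operatorname{Th}(\CM^\text{ind})$ is stable. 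In fact the same transfer upgrades the conclusion to strong minimality, since \Cref{cor:CM-mod-stable} gives this stronger property and strong minimality, being a statement about the dimension theory of definable sets in one variable, is likewise preserved by bi-definability.

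There is no serious obstacle at this stage; the difficult work has already been absorbed into \Cref{prop:bi-definable}, where Pila's theorem on Andr\'{e}-Oort for products of modular curves was invoked to ensure that every ring-theoretically definable subset of $\CM^n$ coincides with a quantifier-free $\mathcal{L}^\text{mod}$-definable set. Once that bi-definability is in hand, \Cref{cor:stability-CM-ind} is a purely formal consequence, and the substance of the statement---that the induced structure on $\CM$ from the complex field has no hidden combinatorial complexity beyond what the modular polynomials provide---is already recorded in the bi-definability itself.
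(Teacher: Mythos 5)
Your proof is correct and is exactly the paper's argument: the paper derives \Cref{cor:stability-CM-ind} from \Cref{cor:CM-mod-stable} and \Cref{prop:bi-definable} without further comment, and your explanation that stability (indeed strong minimality) is a property of the family of definable sets and hence transfers across bi-definability is the intended justification.
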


In fact, by the effective Andr\'{e}-Oort theorem of Binyamini, the bidefinability of
\(\CM^\text{mod}\) and \(\CM^\text{ind}\) is itself effective.

\begin{prop}
\label{prop:effective-bidefine-CM-mod-ind}
There is a computable function \(\phi \mapsto \widetilde{\phi}\) from 
the language of rings to \(\mathcal{L}^\text{mod}\) so that for every
formula \(\phi\) of the language of rings, we have have 
\(\CM^\text{ind} \models R_\phi \leftrightarrow \widetilde{\phi}\)
\end{prop}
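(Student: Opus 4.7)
The plan is to revisit the proof of \Cref{prop:bi-definable} and check that every construction there can be performed algorithmically. The only non-effective step in that argument is the invocation of the Andr\'e-Oort conjecture to describe $\overline{X(\CC) \cap \CM^n}^{\text{Zariski}}$ as a finite union of special varieties; that step is replaced by Binyamini's effective version \cite{Bin-effective-AO}, which furnishes a computable bound on the complexity of the special subvarieties appearing in this decomposition.

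Given an input formula $\phi$ in the language of rings, first apply effective quantifier elimination for algebraically closed fields together with standard Gr\"obner-basis routines to express $\phi(\CC) = \bigcup_{X \in \mathcal{X}}(X(\CC) \smallsetminus \bigcup_{Y \in \mathcal{Y}_X} Y(\CC))$ as a Boolean combination of irreducible affine varieties with $\mathcal{X}$ and each $\mathcal{Y}_X$ explicitly computed. For each such $X$ (and likewise each $Y$), apply the effective Andr\'e-Oort theorem to extract a computable bound $B$ on the complexity of the finitely many special components of $\widetilde{X} := \overline{X(\CC) \cap \CM^n}^{\text{Zariski}}$. Enumerating the pre-special data of \Cref{nota:prespecial-combinatorial} of complexity below $B$ is a finite computation; for each such datum, test whether the resulting special variety is contained in $X$, a decidable algebraic condition. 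This produces effectively a finite list $\mathcal{Z}_X$ of special varieties with $\widetilde{X} = \bigcup_{Z \in \mathcal{Z}_X} Z$, and analogously for $\widetilde{Y}$. The same procedure then computes, for each $Z \in \mathcal{Z}_X$, a finite list $\mathcal{W}_Z$ of special subvarieties whose union is $Z \cap \bigcup_{Y \in \mathcal{Y}_X} \widetilde{Y}$.

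Define $\widetilde{\phi}$ to be the quantifier-free $\mathcal{L}^\text{mod}$-formula $\bigvee_{X \in \mathcal{X}} \bigvee_{Z \in \mathcal{Z}_X}\bigl(R_Z \wedge \neg \bigvee_{W \in \mathcal{W}_Z} R_W\bigr)$, obtained by a total recursive procedure from the steps above. The equivalence $\CM^\text{ind} \models R_\phi \leftrightarrow \widetilde{\phi}$ is then precisely the calculation performed at the end of \Cref{prop:bi-definable}, which used only the fact that $\widetilde{X}$ and $\widetilde{Y}$ coincide with $X$ and $Y$ on $\CM^n$.

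The main obstacle lies in pinning down the form of effectivity granted by \cite{Bin-effective-AO}: one needs not merely a height bound on the discriminants of the CM-components, but an algorithm that, starting from a defining system of polynomials for $X$ (with, say, algebraic coefficients presented in a standard computable way), produces the explicit finite list of pre-special data whose associated special varieties exhaust $\widetilde{X}$. Once this algorithmic form of effective Andr\'e-Oort is granted, the construction is a direct effective translation of \Cref{prop:bi-definable}, and no further model-theoretic input is required.
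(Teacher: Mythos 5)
Your proposal is correct and follows essentially the same route as the paper: reduce via effective quantifier elimination for $\operatorname{ACF}$ to quantifier-free $\phi$, observe that the only non-effective ingredient in \Cref{prop:bi-definable} is the computation of $\overline{X(\CC)\cap\CM^n}^{\text{Zariski}}$, and supply that via the degree/complexity bounds of Binyamini's effective Andr\'e--Oort theorem. Your explicit enumeration of pre-special data below the bound, with containment in $X$ tested algebraically, is a reasonable unpacking of the paper's terser ``we may carry out such a computation,'' and the caveat you raise about the precise algorithmic form of the effectivity is exactly the point the paper itself defers to the footnote of \cite[Theorem 1]{Bin-effective-AO}.
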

\begin{proof}
The theory of algebraically closed fields itself has an effective 
quantifier elimination algorithm.  So to compute 
\(\widetilde{\phi}\) we may take \(\phi\) to be quantifier-free.
Our proof of \Cref{prop:bi-definable} shows that to compute 
\(\widetilde{\phi}\) we need only have a way to compute from an algebraic 
variety \(X \subseteq \mathbb{A}^n_{\mathbb{Q}^\text{alg}}\) the 
(possibly reducible) variety \(\overline{X(\CC) \cap \CM^n}^\text{Zariski}\).
Using the degree bounds of~\cite[Theorem 1]{Bin-effective-AO} as described in 
the footnote to that theorem, we may carry out such a computation.
\end{proof}

Using a theorem of Casanovas and Ziegler on expansions of stable structures, 
we conclude that the theory of \(\fCM\) is tame.

\begin{thm}
\label{thm:stable-decidable-CM-ind}
The theory of \(\fCM\) is stable and decidable.
\end{thm}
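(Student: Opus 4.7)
The plan is to derive both claims by invoking the Casanovas--Ziegler theorem on expansions of stable structures by a new predicate, applied to $T = \operatorname{Th}(\CC,+,-,\cdot,0,1)$ and the predicate $P = \CM$. Since $\operatorname{ACF}_0$ is strongly minimal, hence stable, and since by \Cref{cor:stability-CM-ind} the induced structure $\CM^{\text{ind}}$ is itself stable, the Casanovas--Ziegler criterion yields at once that $\operatorname{Th}(\fCM)$ is stable. This settles the stability assertion without further work.

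For decidability I would exploit the constructive side of \cite{Casanovas-Ziegler-stable-theories-predicate}, which provides a uniform way to rewrite each formula of the expanded language as a Boolean combination of ring-language formulas and formulas whose quantifiers over $\CM$ are evaluated inside $\CM^{\text{ind}}$. Given a sentence $\sigma$ in the language $\{+,-,\cdot,0,1,\CM\}$, this will reduce deciding $\fCM \models \sigma$ to deciding the truth of finitely many sentences of $\operatorname{ACF}_0$ together with finitely many sentences of $\operatorname{Th}(\CM^{\text{ind}})$. Sentences of the first kind are handled by Tarski's decision procedure. Sentences of the second kind are handled by routing through $\CM^{\text{mod}}$: \Cref{prop:effective-bidefine-CM-mod-ind} computably translates ring-language formulas on $\CM$ into $\mathcal{L}^{\text{mod}}$, while \Cref{prop:effective-QE-C-mod}, \Cref{prop:effective-QE-CM-mod}, and \Cref{prop:elementary-extension-CM-mod-C-mod} together give an effective quantifier elimination for $\operatorname{Th}(\CM^{\text{mod}})$; the resulting quantifier-free fragment is decidable because atomic $\mathcal{L}^{\text{mod}}$-sentences only record whether an explicit modular polynomial vanishes at an explicit tuple of CM-points, which can be checked by computing with algebraic numbers.

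The main obstacle I expect is extracting genuine effectivity from the Casanovas--Ziegler reduction: their stability-preservation theorem is stated qualitatively, so one must inspect the proof to confirm that, once the induced theory is effectively decidable and the host theory is effectively decidable, the translation to the two-sorted combination is itself computable. Assuming that inspection goes through, the deep arithmetic input is already in place: Binyamini's effective Andr\'{e}--Oort theorem supplies the computable Zariski closure of $X(\CC) \cap \CM^n$ that powers \Cref{prop:effective-bidefine-CM-mod-ind}, and without this the effective side of the argument would collapse. With it, stability of $\operatorname{Th}(\fCM)$ follows from qualitative Casanovas--Ziegler together with \Cref{cor:stability-CM-ind}, and decidability of $\operatorname{Th}(\fCM)$ follows from effective Casanovas--Ziegler together with the effective translation and elimination results cited above.
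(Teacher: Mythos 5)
Your proposal is correct and follows essentially the same route as the paper: stability via Casanovas--Ziegler applied to the small predicate $\CM$ with stable (indeed strongly minimal, hence nfcp) induced structure from \Cref{cor:stability-CM-ind}, and decidability via the effective translation $\CM^{\text{ind}} \to \CM^{\text{mod}}$ of \Cref{prop:effective-bidefine-CM-mod-ind} combined with the effective quantifier elimination of \Cref{prop:effective-QE-CM-mod}. The one obstacle you flag --- extracting effectivity from the Casanovas--Ziegler reduction itself --- is sidestepped in the paper: rather than an effective formula-by-formula translation, the paper invokes their Corollary 2.2, which says $\operatorname{Th}(\fCM)$ is \emph{axiomatized} by $\operatorname{ACF}_0$ together with $\operatorname{Th}(\CM^{\text{ind}})$; since the latter is decidable, this gives a complete, recursively axiomatized theory, which is automatically decidable with no need to make the Casanovas--Ziegler proof constructive.
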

\begin{proof}
The theory of the complex numbers in the language of rings is stable, 
the countable set \(\CM\) of CM-points is small in 
\(\CC\), and by \Cref{cor:stability-CM-ind} has strongly minimal, and thus, stable
without the finite cover property, induced structure.  Hence, by 
\cite[Theorem A]{Casanovas-Ziegler-stable-theories-predicate}, \(\fCM = (\CC,+,\cdot,0,1,\CM)\) 
has a stable theory.    From~\cite[Corollary 2.2]{Casanovas-Ziegler-stable-theories-predicate}
we see that \(\operatorname{Th}(\fCM)\) is axiomatized by \(\operatorname{Th}(\CC,+,\cdot,0,1) 
= \operatorname{ACF}_0\) and \(\operatorname{Th}(\CM^\text{ind})\). Combining 
\Cref{prop:effective-bidefine-CM-mod-ind} and \Cref{prop:effective-QE-CM-mod}, we see that 
 \(\operatorname{Th}(\CM^\text{ind})\) is decidable.  Hence, \(\operatorname{Th}(\fCM)\) is 
 also decidable.
\end{proof}

\begin{Rk}
\label{rk:Pillay-decidability}
While the results of~\cite{Pillay-induced} are stated in a special case, namely, 
where the predicate by which 
the algebraically closed field is expanded names a finite rank subgroup of a semiabelian variety, 
the proofs would suffice for our application in \Cref{thm:stable-decidable-CM-ind}.
\end{Rk}

\section{Questions about decidability of other structures interpretable in \(\CC(t)\)}
\label{sec:questions}

While \Cref{thm:stable-decidable-CM-ind} shows that the approach to proving the undecidability
of the theory of \(\CC(t)\) by
observing the the structure \(\fCM\) is interpretable in \(\CC(t)\) will not succeed, it does not
resolve the question of whether the other structures interpreted using similar methods are 
decidable.  In this section, we highlight some of those questions and discuss what we would need to know 
in order to resolve them.

Let us recall that \[\Isog := \{(x,y) \in \CC : (\exists E)(\exists E')(\exists \psi:E \to E' \text{ isogeny }) j(E) = x 
\land j(E') = y \}\] is definable in \(\CC(t)\).  Using \(a \in \CC\) as a parameter, 
\(\Isog_a\), the set of \(j\)-invariants of elliptic curves isogenous to an elliptic curve with \(j\)-invariant 
\(a\), is thus also definable in \(\CC(t)\).   Using the Andr\'{e}-Pink-Zannier Conjecture as proven by
Rodolphe and Yafaev~\cite[Theorem 1.3]{Rodolphe-Yafaev-apz} 
in place of the Andr\'{e}-Oort conjecture, we obtain an analogue of \Cref{prop:bi-definable}.

\begin{prop}
\label{prop:bidefine-isog-mod-ind}
For any complex number \(a \in \CC\) the structures \((\Isog_a^\text{ind})_{\Isog_a}\) and \((\Isog_a^\text{mod})_{\Isog_a}\)
are bidefinable.
\end{prop}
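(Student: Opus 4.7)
The plan is to mirror the proof of \Cref{prop:bi-definable}, substituting the Andr\'{e}-Pink-Zannier theorem of Rodolphe and Yafaev for the use of Andr\'{e}-Oort. Applied to the point \((a, \ldots, a) \in Y(1)^n = \mathbb{A}^n_\CC\), that theorem asserts that every irreducible subvariety of \(\mathbb{A}^n_\CC\) containing a Zariski-dense subset of the generalized Hecke orbit \(\Isog_a^n\) is weakly special.

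One direction of bidefinability is immediate: each predicate \(R_X\) in \(\mathcal{L}^\text{mod}\) is defined by a conjunction of modular polynomial equations, hence is already definable in the language of rings. Thus \((\Isog_a^\text{mod})_{\Isog_a}\) is a reduct of \((\Isog_a^\text{ind})_{\Isog_a}\).

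For the converse, I would first invoke quantifier elimination for \(\operatorname{ACF}_0\) to reduce an arbitrary ring-definable set in \(\CC^n\) to a Boolean combination of algebraic sets \(X(\CC)\) for irreducible \(X \subseteq \mathbb{A}^n_\CC\). It then suffices to express \(X(\CC) \cap \Isog_a^n\) as a quantifier-free formula in \((\mathcal{L}^\text{mod})_{\Isog_a}\). Replacing \(X\) by \(\widetilde{X} := \overline{X(\CC) \cap \Isog_a^n}^\text{Zariski}\) (and likewise for any subvarieties to be subtracted off) does not affect this intersection. By Andr\'{e}-Pink-Zannier, each irreducible component \(Z\) of \(\widetilde{X}\) is weakly special. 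Moreover, if a coordinate projection \(\rho_i(Z)\) is a single point \(\{c\}\), then \(c\) lies in the image \(\rho_i(\Isog_a^n) = \Isog_a\), so every constant coordinate of \(Z\) lies in \(\Isog_a\). Such a weakly special \(Z\) is then defined in \((\Isog_a^\text{mod})_{\Isog_a}\) by combining a special-variety predicate \(R_{Z_0}\) (applied to the non-constant coordinates) with equations \(x_i = c_i\) using the available constant symbols \(c_i \in \Isog_a\). The Boolean-combination bookkeeping at the end of \Cref{prop:bi-definable} then goes through verbatim.

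The principal step is the invocation of Andr\'{e}-Pink-Zannier; the remainder is syntactic manipulation. Naming parameters from \(\Isog_a\) on the modular side is essential precisely because for general (non-CM) \(a\), an element of \(\Isog_a\) cannot be isolated by an equation of the form \(\Phi_\ell(x,x) = 0\), so the weakly special varieties produced above are not already named by \(\mathcal{L}^\text{mod}\)-predicates alone. The main obstacle, beyond citing the deep number-theoretic input, is verifying that the constants arising inside these weakly special pieces genuinely come from \(\Isog_a\) — which is handled by the projection argument above.
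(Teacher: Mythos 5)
Your proof is correct and follows exactly the route the paper intends: the text offers no further detail beyond ``use Andr\'{e}--Pink--Zannier (Richard--Yafaev) in place of Andr\'{e}--Oort in the argument of \Cref{prop:bi-definable}, naming parameters from \(\Isog_a\) as in \Cref{rk:parameters-in-bi-definition}.'' Your added check that the constant coordinates of the weakly special components of \(\overline{X(\CC)\cap\Isog_a^n}^\text{Zariski}\) lie in \(\Isog_a\) (via the coordinate projections) is precisely the point the parameters are needed for, and supplies detail the paper leaves implicit.
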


\begin{Rk}
\label{rk:parameters-in-bi-definition}
Note that in \Cref{prop:bidefine-isog-mod-ind} we need to allow for parameters from \(\Isog_a\).  Implicitly, we were 
already doing this for \(\CM^\text{mod}\) in \Cref{prop:bi-definable} as every element of \(\CM\) is already definable by
a predicate there.
\end{Rk}

Unlike for \(\CM^\text{mod}\), \(\Isog_a^\text{mod}\) is not an elementary substructure of \(\CC^\text{mod}\). However, we may still 
deduce quantifier elimination for \((\Isog_a^\text{mod})_{\Isog_a}\) from what we know for \(\CC^\text{mod}\).

\begin{prop}
\label{prop:qe-isog-a-mod}
For any complex number \(a \in \CC\)  
the theory of \((\Isog_a^\text{mod})_{\Isog_a}\) admits quantifier elimination and is therefore 
strongly minimal.
\end{prop}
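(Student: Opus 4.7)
The plan is to derive quantifier elimination for $(\Isog_a^{\text{mod}})_{\Isog_a}$ from the bidefinability asserted in \Cref{prop:bidefine-isog-mod-ind}, combined with quantifier elimination for $\operatorname{ACF}_0$ and the decomposition of Zariski closures of subsets of Hecke orbits furnished by the Andr\'{e}--Pink--Zannier theorem (which is what underlies \Cref{prop:bidefine-isog-mod-ind} in the first place). Strong minimality will follow immediately by inspecting quantifier-free formulas in a single variable.

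Let $\phi(\bar x)$ be a formula of $(\mathcal{L}^{\text{mod}})_{\Isog_a}$. By \Cref{prop:bidefine-isog-mod-ind}, the set it defines in $(\Isog_a^{\text{mod}})_{\Isog_a}$ is the trace on $\Isog_a^n$ of some set $A \subseteq \CC^n$ definable in $\CC$ as a field with parameters from $\Isog_a$. Applying quantifier elimination for $\operatorname{ACF}_0$, we may take $A$ to be a Boolean combination of zero-sets $V(q)$ of polynomials $q$ with coefficients in $\Isog_a$. Distributing the Boolean operations across the trace, it thus suffices to show that for every such $q$, the trace $V(q) \cap \Isog_a^n$ is quantifier-free definable in $(\mathcal{L}^{\text{mod}})_{\Isog_a}$.

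The Andr\'{e}--Pink--Zannier theorem, as invoked to prove \Cref{prop:bidefine-isog-mod-ind}, tells us that $\overline{V(q) \cap \Isog_a^n}^{\text{Zar}}$ is a finite union $\bigcup_i V_i$ of weakly special subvarieties of $V(q)$. Any constant $c$ appearing as a defining equation $x_k = c$ of some $V_i$ must belong to $\Isog_a$, for otherwise $V_i \cap \Isog_a^n$ would be empty, contradicting Zariski density of the trace in $V_i$. Presenting each $V_i$ via a pre-weakly-special datum $(\pi_0, \Pi, \xi, \gamma)$ with $j(\xi(k)) \in \Isog_a$ for $k \in \pi_0$, a direct adaptation of \Cref{prop:special-realized-as-image-of-combinatorial} to the weakly special setting shows that $V_i = X_i \cap \bigcap_{k \in \pi_0}\{x_k = j(\xi(k))\}$, where $X_i$ is the purely special variety corresponding to the datum obtained by freeing the $\pi_0$-coordinates into singleton blocks of $\Pi$. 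Consequently $V_i \cap \Isog_a^n$ is defined by the quantifier-free formula $R_{X_i}(\bar x) \wedge \bigwedge_{k \in \pi_0}(x_k = j(\xi(k)))$. Since $V(q)$ is Zariski closed, each $V_i \subseteq V(q)$, and we obtain $V(q) \cap \Isog_a^n = \bigcup_i V_i \cap \Isog_a^n$, a finite union of quantifier-free definable sets.

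Strong minimality is then immediate: any quantifier-free formula in one free variable is a Boolean combination of atomic formulas $R_X(y)$ for unary special $X \subseteq \mathbb{A}^1_\CC$ and equations $y = c$ with $c \in \Isog_a$; the only unary special varieties are $\mathbb{A}^1_\CC$ itself and the finite sets cut out by $\Phi_\ell(y,y) = 0$ for $\ell > 1$, so every such formula defines a finite or cofinite subset of $\Isog_a$. The principal obstacle is the application of Andr\'{e}--Pink--Zannier and, in particular, the verification that the constants appearing in the weakly special components of the decomposition must lie in $\Isog_a$ rather than being arbitrary complex numbers; once this is secured, the remaining combinatorial identification of each $a$-special component with an explicit quantifier-free formula in $(\mathcal{L}^{\text{mod}})_{\Isog_a}$ is a mild extension of the analysis carried out in Section~\ref{sec:stability}.
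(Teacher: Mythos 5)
Your argument breaks down at its very first step. \Cref{prop:bidefine-isog-mod-ind} says that $(\Isog_a^\text{mod})_{\Isog_a}$ and $(\Isog_a^\text{ind})_{\Isog_a}$ have the same definable sets; it does \emph{not} say that every set definable (with quantifiers) in either structure is the trace on $\Isog_a^n$ of a set definable in the field $\CC$. The basic predicates of $\mathcal{L}^\text{ind}$ are traces, but a quantifier in these structures ranges over $\Isog_a$, not over $\CC$, so $\{\bar b \in \Isog_a^n : \exists y\in\Isog_a\ (\bar b,y)\in V\}$ need not coincide with $\{\bar b\in\Isog_a^n : \exists y\in\CC\ (\bar b,y)\in V\}$. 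Asserting that the two agree --- equivalently, that witnesses to existential formulas can always be found inside $\Isog_a$ --- is essentially the content of the quantifier elimination you are trying to prove, so as written the reduction is circular. This witness-finding step is exactly where the real work lies: for a weakly special $X\subseteq\mathbb{A}^{n+1}_\CC$ with constant coordinates from $\Isog_a$, proper weakly special $Y \subseteq X$, and a tuple $b\in\Isog_a^n$, one must check that a nonempty set $X_b(\CC)\smallsetminus\bigcup_{Y} Y_b(\CC)$ meets $\Isog_a$. This holds either because $X_b=\mathbb{A}^1_\CC$ and the excluded set is finite, so all but finitely many points of the infinite set $\Isog_a$ serve as witnesses, or because some equation $\Phi_\ell(b_i,x)=0$ holds on $X_b$, which forces $X_b(\CC)$ into the isogeny class of $b_i$, namely $\Isog_a$. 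Your proposal never addresses this.

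A secondary point: the detour through the ring language, quantifier elimination for $\operatorname{ACF}_0$, and Andr\'e--Pink--Zannier is not needed for this proposition and makes the argument heavier than it should be. The statement is purely about the modular language; the intended route is to apply quantifier elimination for $\CC^\text{mod}$ (\Cref{prop:QE-mod}) to $\exists x_{n+1}\psi$, discard the disjuncts of the resulting quantifier-free formula whose constant coordinates lie outside $\Isog_a$ (these have empty trace on $\Isog_a^n$), and then supply the witness argument above. Andr\'e--Pink--Zannier enters only in \Cref{prop:bidefine-isog-mod-ind}, when the modular structure is compared to the induced ring structure. Your observation that the constants occurring in the components of $\overline{V(q)\cap\Isog_a^n}^{\text{Zar}}$ must lie in $\Isog_a$ is correct and is a relevant ingredient for \emph{that} comparison, but it does not substitute for the missing quantifier-witness step here.
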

\begin{proof}
Working by induction and the usual reductions using irredundant representations of constructible sets, it suffices to 
show that if \(\phi\) is a quantifier-free formula in the free variables \(x_1, \ldots, x_{n+1}\) of the form 
\(\phi = R_X \land \neg \bigwedge_{Y \in \mathcal{Y}} R_Y\) where \(X\) is a weakly special variety whose constant 
coordinates come from \(\Isog_a\) and each \(Y\) is a proper weakly special variety of \(X\), then 
\(\exists x_{n+1} \psi\) is equivalent to a quantifier-free formula.    By quantifier elimination for 
\(\CC^\text{mod}\) there is a quantifier-free formula \(\theta\) so that 
\(\CC^\text{mod} \models \theta \leftrightarrow \exists x_{n+1} \psi\).   
Express \(\theta\) in an irredundant disjunctive normal form as 
\(\theta = \bigvee_{Z \in \mathcal{Z}} \left( R_Z \land \neg \bigvee_{W \in \mathcal{W}_Z} R_W \right)\) where 
\(\mathcal{Z}\) is a finite set of weakly special varieties and for 
each \(Z \in \mathcal{Z}\), \(\mathcal{W}_Z\) is a finite set of proper weakly 
special subvarieties of \(Z\).  Let \(\mathcal{Z}' := \{ Z \in \mathcal{Z} :
\text{ no constant coordinate of } Z \text{ lies outside of } \Isog_a \} \).
Likewise, let for \(Z \in \mathcal{Z}'\) ler \(\mathcal{W}_Z'\) consist of those 
\(W \in \mathcal{W}_Z\) not having any constant coordinates outside of \(\Isog_a\).
Let \(\theta' := \bigvee_{Z \in \mathcal{Z}'} \left( R_Z \land \neg \bigvee_{W \in \mathcal{W}_Z'} R_W \right)\)
Noting that if \(V \subseteq \mathbb{A}^n_\CC\) has a constant coordinate outside 
of \(\Isog_a\), then \(V(\CC) \cap \Isog_a^n = \varnothing\), we see that 
\(\Isog_a \models \theta \leftrightarrow \theta'\).  We finish the proof as with the proof of
\Cref{prop:effective-QE-CM-mod}.

Suppose that for some \(b \in \Isog_a^n\) we have 
\(\Isog_a^\text{mod} \models \exists x_{n+1} \psi(b,x_{n+1})\).
Then because \(\Isog_a\) is a substructure of \(\CC^\text{mod}\), we have 
\(\CC^\text{mod} \models \exists x_{n+1} \psi(b,x_{n+1}) \) 
as well.  From the characteristic property of 
\(\theta\), we then have that 
\(\CC^\text{mod} \models \theta(b)\), which again gives
because \(\Isog_a\) is a substructure that 
\(\Isog_a^\text{mod} \models \theta(b)\), which 
means that \(\Isog_a \models \theta'(b)\) as 
\(\theta\) and \(\theta'\) are equivalent in 
\(\Isog_a\). In the other directions, if 
\(\Isog_a \models \theta'(b)\), then 
\(\CC^\text{mod} \models \exists x_{n+1} \psi(b,x_{n+1})\).
From the equations defining \(X\) we see that either 
\(X_b = \mathbb{A}^1_\CC\) (in which case the non-emptyness
of \(X_b(\CC) \smallsetminus \bigcup_{Y \in \mathcal{Y}} Y_b(\CC)\) implies that all but finitely many elements of 
\(\Isog_a\) could serve as witnesses of \(\exists x_{n+1} \psi\))
or \(\Phi_\ell(b_i,x)\) holds on \(X_b\) for some 
\(1 \leq i \leq n\) and \(\ell \in \mathbb{Z}_+\) so that 
\(X_b(\CC)\) is contained in the isogeny class of 
\(b_i\), which is \(\Isog_a\).  Either way, we see 
that \(\Isog_a \models \exists x_{n+1} \psi\).
\end{proof}

\begin{cor}
\label{cor:stability-isogeny}
For any \(a \in \CC\) the theory of 
\((\CC,+,\dot,0,1,\Isog_a)\) is stable.
\end{cor}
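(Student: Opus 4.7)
The plan is to mirror the proof of \Cref{thm:stable-decidable-CM-ind}, substituting $\Isog_a$ for $\CM$ and using the results already established in this section. Concretely, I would apply the theorem of Casanovas--Ziegler on expansions of stable theories by predicates~\cite{Casanovas-Ziegler-stable-theories-predicate}: one expands the stable (in fact strongly minimal) theory $\mathrm{Th}(\CC,+,\cdot,0,1) = \mathrm{ACF}_0$ by a unary predicate naming a small subset, and to conclude stability of the expansion it suffices to check that the induced structure on that predicate is stable without the finite cover property.

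First I would verify the hypotheses. The set $\Isog_a$ is the isogeny class of an elliptic curve, which is a countable subset of $\CC$, so it is small inside the algebraically closed field $\CC$ of continuum cardinality. To control the induced structure $\Isog_a^{\text{ind}}$, I would invoke \Cref{prop:bidefine-isog-mod-ind}, which gives that $(\Isog_a^{\text{ind}})_{\Isog_a}$ and $(\Isog_a^{\text{mod}})_{\Isog_a}$ are bidefinable once parameters from $\Isog_a$ are permitted. By \Cref{prop:qe-isog-a-mod}, the theory of $(\Isog_a^{\text{mod}})_{\Isog_a}$ admits quantifier elimination and is strongly minimal; strong minimality transfers across bidefinability, so $(\Isog_a^{\text{ind}})_{\Isog_a}$ is strongly minimal as well. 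In particular it is stable and, being strongly minimal, has no finite cover property.

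Then I would cite \cite[Theorem~A]{Casanovas-Ziegler-stable-theories-predicate} to conclude that the expansion $(\CC,+,\cdot,0,1,\Isog_a)$ of the stable structure $(\CC,+,\cdot,0,1)$ by the predicate for the small set $\Isog_a$ is itself stable.

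The only subtle point I anticipate is the role of the parameters in \Cref{prop:bidefine-isog-mod-ind}: unlike in the $\CM$ case, the elements of $\Isog_a$ are not individually definable in $\Isog_a^{\text{mod}}$ without naming them (since $\Isog_a$ is a single orbit under the $\mathrm{GL}_2(\mathbb{Q})^+$-correspondences rather than a union of singleton special subvarieties). This is harmless for the Casanovas--Ziegler criterion, which is formulated for the induced structure with parameters from the predicate, and the bidefinability is exactly stated at that level. Thus the proof reduces to combining \Cref{prop:qe-isog-a-mod}, \Cref{prop:bidefine-isog-mod-ind}, and \cite[Theorem~A]{Casanovas-Ziegler-stable-theories-predicate}, with no further input. (Note that this argument yields stability but, without an effective version of the Andr\'e--Pink--Zannier statement analogous to Binyamini's effective Andr\'e--Oort, does not address decidability---which is precisely the obstacle flagged in the introduction.)
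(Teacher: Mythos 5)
Your proposal is correct and follows essentially the same route as the paper: the paper's proof of this corollary is simply an application of \cite[Theorem A]{Casanovas-Ziegler-stable-theories-predicate}, with \Cref{prop:bidefine-isog-mod-ind} and \Cref{prop:qe-isog-a-mod} supplying the required strong minimality (hence stability without the finite cover property) of the induced structure on the small set \(\Isog_a\). Your additional remarks on the role of parameters and on the missing effectivity input for decidability accurately reflect what the paper itself flags in \Cref{rk:parameters-in-bi-definition} and in the discussion following the corollary.
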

\begin{proof}
Apply \cite[Theorem A]{Casanovas-Ziegler-stable-theories-predicate}.
\end{proof}

To obtain decidability for the theory of 
\((\CC,+,\dot,0,1,\Isog_a)\) we would need to know the
the function which takes a variety \(X \subseteq \mathbb{A}^n_{\mathbb{Q}(a)^\text{alg}}\) to 
\(\overline{\Isog_a^n \cap X(\CC)}\) is computable. 
It seems plausible that the methods of~\cite{Bin-effective-AO}
could be adapted to produce such an algorithm.  In the 
case that \(a\) is transcendental, this can be deduced 
from~\cite{Fr-Sc-smj}.

The argument for \Cref{prop:qe-isog-a-mod} can be applied to 
finitely many isogeny orbits and the set of CM j-invariants 
at one time.  From this it follows that the structure 
\( (\CC,+,\cdot,0,1,\CM,(\Isog_a)_{a \in \CC})\) is 
stable and then 
if the expected analogues of 
\Cref{prop:effective-bidefine-CM-mod-ind} hold for 
\(\Isog_a\), then for any computable algebraically 
closed subfield \(K_0 \subseteq \CC\), the
structure \( (\CC,+,\cdot,0,1,\CM,(\Isog_a)_{a \in K_0})\)
would have a decidable theory.

We have see that the relation \(\Isog\) itself is 
definable in \(\CC(t)\).  The set \(\Isog\) is not 
small relative to \(\CC\).  Thus, the main theorems 
of~\cite{Casanovas-Ziegler-stable-theories-predicate} do 
not immediately apply.  However, because the ambient theory 
of the complex numbers as a field is strongly minimal, 
the implications from stability without the finite cover 
property of the theory of the induced structure on 
\(\Isog\) to stability of \((\CC,+,\cdot,0,1,\Isog)\) and
likewise from decidability of the induced theory to 
decidability of the theory with a predicate still hold.  
Thus, we reduce to the following questions.

\begin{question}
\label{quest:stable-Isog}
Is the theory of \(\Isog^\text{ind}\) stable?  Is its 
theory decidable?
\end{question}

It would seem that the Zilber-Pink conjecture should be 
relevant for resolving \Cref{quest:stable-Isog}.
However, even assuming that conjecture, the description 
of even the quantifier-free induced structure on 
\(\Isog\) is much more complicated than \(\Isog^\text{mod}\).
In~\cite{Pila-Scanlon-ZP} a differential algebraic analogue
\(\Xi\)
of \(\Isog\) is considered, namely the Kolchin closure of this
set with the constant points removed. The set \(\Xi\) 
has decidable, 
stable induced structure because it is a reduct of something
definable relative to the theory of differentially closed
fields of characteristic zero.   Moreover, 
\(\Xi\) satisfies a version of the Zilber-Pink conjecture.
While \(\Isog^\text{ind}\) is not an elementary substructure
of \(\Xi^\text{ind}\) (because \(\Xi\) is missing 
algebraic points), one might hope that their theories are
close enough to permit a description of the sets defined with 
quantifiers in \(\Isog^\text{ind}\) through a comparison to 
the corresponding sets in \(\Xi^\text{mod}\).  

There are some other sets definable in \(\CC(t)\) using 
variants of the method used to prove \Cref{cor:def-cm-isog}.  
For general abelian varieties \(A\) and \(B\) 
defined over \(\CC\), it is still the case that 
\(\operatorname{Hom}(A,B) = B(\CC(A))/B(\CC)\).  Thus,
if we have a uniformly definable family \(B \to S\) of 
abelian varieties and uniformly definable family 
\(\{\CC(A)\}_{A \in F}\) of function fields of abelian 
varieties, there are first-order formulas specifying that the
rank of \(\operatorname{Hom}(A,B)\) is some fixed value.   

We do not know how to interpret function fields of higher
dimensional varieties in \(\CC(t)\).  If we 
could, then by a theorem of Eisentr\"{a}ger~\cite{Eis},
we would already know that \(\CC(t)\) is 
undecidable.  Instead, if \(X\) is a smooth projective 
curve of genus \(g \geq 1\) over \(\CC\) and we 
fix some point \(P \in X(\CC)\), then there map of 
algebraic varieties \(f:X \to J_X\) from the curve \(X\)
to its Jacobian, an abelian variety of dimension \(g\), having
the property that for any rational map \(h:X \dashrightarrow A\) 
from \(X\) to an abelian variety \(A\) with \(h(P) = 0_A\),
there is a unique map \(\overline{h}:J_X \to A\) with 
\(\overline{h} \circ f = h\).  That is, 
\(\operatorname{Hom}(J_X,A) = A(\CC(X)))/A(\CC)\) so that 
we can uniformly interpret the groups 
\(\operatorname{Hom}(J_X,A) \) of maps from Jacobians to 
abelian varieties.   I do not have useful conclusions about
what we might do with these groups to shed light on the 
(un)decidability of \(\CC(t)\). Some natural questions about
them follow.

\begin{question}
\label{quest:CM-higher-dimensional}
Let \(A \to S\) be a family of abelian varieties over 
the complex numbers.  Is the set 
\(\{a \in S(\CC) : A_a \text{ has 
 complex multiplication } \}\) definable in \(\CC(t)\).  
\end{question}

I would guess that the answer to \Cref{quest:CM-higher-dimensional}  is positive and that the definition 
could be obtained by working with a suitable resolution 
of the abelian varieties in this family by Jacobians, but
to my knowledge, it remains an open problem whether such 
resolutions always exist.

\begin{question}
\label{quest:induced-higher}
Let \(A \to S\) be a family of abelian varieties over
the complex numbers and let \(X \to T\) be a family of 
smooth projective curves of some fixed genus over 
\(\CC\).  Fix a natural number \(r\).  Let 
\(H_r := \{ (t,s) \in (T \times S)(\CC) : \operatorname{rk} \operatorname{Hom}(J_{C_t},A_s) = r \} \).  
Is the induced structure on \(H_r\) stable?  It is
decidable?
\end{question}

\bibliographystyle{siam}
\bibliography{dct}

\end{document}